\journal{Journal of Statistical Planning and Inference}
\newcommand{\ie}{{\it i.e.}}
\newcommand{\cf}{{\it cf.}}
\newcommand{\eg}{{\it e.g.}}
\newcommand{\scrB}{{\mathscr B}}
\newcommand{\scrD}{{\mathscr D}}
\newcommand{\scrF}{{\mathscr F}}
\newcommand{\scrG}{{\mathscr G}}
\newcommand{\scrP}{{\mathscr P}}
\newcommand{\scrX}{{\mathscr X}}
\newcommand{\al}{{\alpha}}
\newcommand{\ep}{{\epsilon}}
\newcommand{\tht}{{\theta}}
\newcommand{\Tht}{{\Theta}}
\renewcommand{\emptyset}{{\varnothing}}
\newcommand{\set}[1]{\left\{ #1 \right\}}
\newcommand{\ctg}{\mathbin{\lhd}}
\newcommand{\ft}[2]{{\textstyle{\frac{#1}{#2}}}}
\newcommand{\conv}[1]%
  {{\mathrel{\,\xrightarrow{\widthof{\,#1\,}}\,}}}
\newcommand{\convas}[1]%
  {{\mathrel{\,\xrightarrow{\widthof{\,#1\text{-a.s.}\,}}\,}}}
\newcommand{\convprob}[1]%
  {{\mathrel{\,\xrightarrow{\widthof{\,#1\,}}\,}}}
\newcommand{\convweak}[1]%
  {{\mathrel{\,\xrightarrow{\widthof{\,#1\text{-w.}\,}}\,}}}
\newcommand{\twobytwo}[4]%
  {\left(\begin{array}{cc} #1 & #2 \\ #3 & #4 \end{array}\right)}
\newcommand{\twovec}[2]%
  {\left({\begin{array}{c} #1\\#2 \end{array}}\right)}
\newcommand{\contig}{\mathop{\vartriangleleft}}
\newcommand{\ceil}[1]{\left\lceil #1 \right\rceil}
\newcommand{\floor}[1]{\left\lfloor #1 \right\rfloor}
\newcommand{\diam}{{\mathrm{diam}}}
\newtheoremstyle{customtheorem}
  {0.5em}
  {0.2em}
  {\itshape}
  {}
  {\scshape}
  {}
  {1ex}
  {}
\theoremstyle{customtheorem}
\newtheorem{theorem}{Theorem}[section]
\newtheorem{lemma}[theorem]{Lemma}
\newtheorem{proposition}[theorem]{Proposition}
\newtheorem{corollary}[theorem]{Corollary}
\newtheorem{definition}[theorem]{Definition}
\newtheoremstyle{customremark}
  {0.5em}
  {0.2em}
  {}
  {}
  {\scshape}
  {}
  {1ex}
  {}
\theoremstyle{customremark}
\renewenvironment{proof}{\par\noindent{\scshape Proof}\;}{\hfill\qedsymbol\par}
\newtheorem{remark}[theorem]{Remark}
\newtheorem{example}[theorem]{Example}
\begin{document}

\begin{frontmatter}

\title{Asymptotic uncertainty quantification for communities\\
  in sparse planted bi-section models\protect}

\author{B. J. K. Kleijn}
\address{Korteweg-de~Vries Institute for Mathematics,
    University of Amsterdam,
    P.O. Box 94248,
    1090 GE Amsterdam,
    The Netherlands}

\author{J. van Waaij\corref{correspondingauthor}}
\address{Department of Health Technology, Section of Bioinformatics, Technical University of Denmark,
Kemitorvet, 204, 252
2800 Kongens Lyngby
Denmark}


\cortext[correspondingauthor]{Corresponding author}
\ead{jvanwaaij@gmail.com}


\begin{abstract}
Posterior distributions for community structure
in sparse planted bi-section models are shown to achieve
exact (resp. almost-exact) recovery, with sharp bounds
for the sparsity regimes where edge probabilities decrease
as $O(\log(n)/n)$ (resp. $O(1/n)$). Assuming posterior
recovery, one may interpret credible sets (resp. enlarged
credible sets) as asymptotically consistent confidence
sets; the diameters of those credible sets are controlled
by the rate of posterior concentration. If credible levels
are chosen to grow to one quickly enough, corresponding
credible sets can be interpreted as frequentist confidence
sets without conditions on posterior concentration. In
the regimes with $O(1/n)$ edge sparsity, or when
within-community and between-community edge probabilities are
very close, credible sets may be enlarged to achieve
frequentist asymptotic coverage, also without conditions
on posterior concentration.
\end{abstract}

\begin{keyword}
community detection \sep sparse random graph%
  \sep posterior consistency \sep uncertainty quantification
\MSC[2010] 05C80 \sep 62C10 \sep 62G05 \sep 62G15
\end{keyword}

\end{frontmatter}


\section{Community detection and uncertainty quantification}
\label{sec:intro}

One of the central questions in network science concerns
community detection \citep{Girvan02}: one observes a graph with
vertices that belong to various (unobserved) communities and
edges that are present or not with community-dependent
probabilities. The goal is to infer the community structure
based on the presence or absence of edges in the observed graph.
The stochastic block model \citep{Holland83} is the most popular
model for the observation: edges in the observed random graph 
occur independently, with probabilities that depend on the
community membership of the vertices they connect. As such,
the stochastic block model is an inhomogeneous generalization
of the Erd\H os-R\'enyi model \citep{Erdos59}. These days
stochastic block models are applied in all branches of science
and its applications and are widely employed as canonical
models for the study of clustering and community structure
\citep{Fortunato10,Abbe18}.

Aside from its applications, the theory of the community
detection problem has attracted great attention from outside
network science, particularly from statistical physics, machine
learning, probability theory, combinatorics and statistics. The
machine learners' practically oriented perspective has led to a
wide range of algorithms for community detection, in which
computability is central. From the more stochastically centred
perspective of probabilists and statisticians, a large variety
of estimation methods for community structure has been proposed,
including spectral clustering (see \cite{Krzakala13} and many
others), maximization of the likelihood and other modularities
\cite{Girvan02,Bickel09,Choi12,Amini13}, semi-definite programming
\cite{Hajek16,Guedon16}, and penalized ML detection of communities
with minimax optimal misclassification ratio \cite{Zhang16,Gao17}).
Bayesian methods have been popular throughout, \eg, the original
work \cite{Snijders97}, the work of \cite{Decelle11a,Decelle11b},
with uniform priors and, more recently, \cite{Suwan16} with an
empirical prior choice. MCMC simulation of posterior distributions for community
structure is discussed, for example, in
\cite{mcdaid13,Geng19,Jiang21}). This very brief summary does not
do justice to the vast size and enormous variety of the literature
on community detection methods, and we refer to the highly
informative review of \cite{Abbe18} for an extensive bibliography
and a more comprehensive discussion. 

Community detection in very large graphs is used to assess
and compare detection methods: large numbers of edges supply large
amounts of information on community structure and community
detection methods should therefore be more accurate in large
graphs. Asymptotically, a natural requirement for any detection
method is consistency: as the number of vertices goes to infinity,
community estimates are required to coincide with (exact recovery)
or converge to (almost-exact recovery) the true, underlying
community structure with high probability. Let us denote the
observed graph by $X^n$, where $n$ is the number of vertices;
the (random) presence (resp. absence) of an edge between vertices
labelled $1\leq i,j\leq n$, $i\neq j$ is denoted $X^n_{ij}=1$
(resp. $X^n_{ij}=0$). In most variations of the stochastic block
model, the vertices belong exclusively to one of $K\geq2$
communities as described by the unobserved community assignment
vector $\theta_n$ with components $\theta_i\in\{0,\ldots,K-1\}$,
($1\leq i\leq n$). Community $k$, ($0\leq k\leq K-1$), has
$n_k=\sum_i 1_{\theta_i=k}$ members and vertex $i$,
($1\leq i\leq n$), has (random) degree $\sum_j X^n_{ij}$.
The edge connecting vertices $i$ and $j$ occurs with probability
$Q_n(\theta_i,\theta_j)$ depending on the communities of those
vertices. Stochastic block models vary in that they assume
known or unknown: the number of communities $K$, the edge
probabilities $Q_n:\{0,\ldots,K-1\}^2\to[0,1]$ and/or the
sizes $n_0,\ldots,n_{K-1}$ of the communities.

Note that the expected degree of vertex $i$ is equal to
$\sum_{j\neq i} Q_n(\theta_i,\theta_j)$, implying that
with $n$-independent edge-probabilities expected
degrees are proportional to the graph size $n$. Many
proofs of asymptotic consistency for community detection
methods are based on models in which $Q_n$ does not depend
on the graph size $n$, meaning that they describe
limits of stochastic block models \emph{with unbounded
degrees}. Real-world networks (\eg, social networks or
citation networks) describe vertices with expected degrees
that stay bounded or grow more slowly with the size of the
network. To describe large networks with bounded or
slowly-growing degrees, a form of \emph{edge sparsity}
is required: edge probabilities must decrease with
increasing graph size (this point is also emphasized in
\cite{Yuan22}). For example, if $Q_n=O(n^{-1})$, expected
degrees are bounded, and if $Q_n=O(n^{-1}\log(n))$,
expected degrees grow logarithmically with $n$. One may
then wonder \emph{which levels of edge sparsity make
community detection only just possible}; or conversely,
at which level of edge sparsity do community detection
and other forms of inference on the community structure
become impossible?

In \cite{Dyer89,Decelle11a,Decelle11b,Abbe16,Massoulie14,Mossel16}
and many other publications, feasibility of the community
detection problem and sharp bounds on edge sparsity are
studied in the context of the so-called \emph{planted
bi-section model}, which is a stochastic
block model with $K=2$ equally-sized communities of $n$
vertices each and edge probabilities $p_n$ (within
communities) and $q_n$ (between communities) that decrease
with $n$ (for a more detailed description of the model,
see \cref{sec:pbm}). The answers relate to the three
sparsity phases (\ie, fragmented, giant-component or connected)
of the Erd\H os-R\'enyi graph \citep{Erdos59,Bollobas07}: 
for example, \cite{Dyer89} showed that minimization of the
number of edges between estimated communities finds the true community
assignment vector with high probability, if there exists a
constant $A>0$ such that, $p_n-q_n\geq A n^{-1}\log n$; in
\cite{Mossel16} it is shown that community detection with
errors that converge to zero in probability is
possible, if and only if,
\begin{equation}
  \label{eq:MNSdetect}
  \frac{n(p_n-q_n)^2}{p_n+q_n}\to\infty,
\end{equation}
(see also \cite{Decelle11a,Decelle11b}). In
\cite{Massoulie14,Abbe16,Mossel15,Mossel16} it is shown that
if we write $p_n=a_nn^{-1}\log n$ and $q_n=b_nn^{-1}\log n$, 
assuming that $C^{-1}\leq a_n, b_n\leq C$, estimates coinciding
with the true community assignment vector with high probability
are possible, if and only if,
\begin{equation}
  \label{eq:mnscritical}
  (a_n+b_n-2\sqrt{a_nb_n}-1)\log n + \ft12\log\log n\to \infty.
\end{equation}
Conditions \cref{eq:MNSdetect} and \cref{eq:mnscritical} not
only lower-bound the degree of edge-sparsity, but also guarantee
sufficient distinction \citep{Janson10,Banerjee18} from the
Erd\H os-R\'enyi graph ($p_n=q_n$), in which communities are not
identifiable.

Besides community detection, other forms of inference on the
parameters defining a stochastic block model are studied. 
For example, \cite{Bickel16,Lei16} define asymptotically
consistent tests for the number of communities in a stochastic
block models with unbounded degrees. In \cite{Yuan22} an
asymptotically consistent likelihood ratio test is considered
to distinguish between the Erd\H os-R\'enyi graph in a
planted bi-section graph with bounded degrees.

The first goal of this paper is to explore the behaviour of
posterior distributions for the community assignment vector
in the planted bi-section model with bounded and slowly-growing
degrees and to demonstrate appropriate forms of \emph{posterior
consistency} under condition \cref{eq:MNSdetect} and
(a slight variation on) condition \cref{eq:mnscritical}
(see \cref{sec:pbmrecovery}).
The second, more important goal is \emph{frequentist uncertainty
quantification} based on an advantage that posteriors offer over
other estimation methods: in \cref{sec:pbmuncertainty},
Bayesian credible sets for community assignment are shown to
be (or can be enlarged to form) \emph{asymptotically consistent
confidence sets}. In \cref{sec:pbmconcdisc} we draw conclusions,
discuss some further possibilities and relate to other work.

Section \ref{sec:pbm} introduces the planted bi-section model, the
Bayesian posterior and test functions to prove its convergence.
Appendix \ref{app:defs} establishes notation and basic Bayesian
definitions; appendix \ref{app:PBMrc} introduces remote contiguity
and applies it to convert credible sets to confidence sets,
as in \cite{Kleijn21}.

\subsection*{Acknowledgements} The authors thank E.~Mossel and
J.~Neeman for a helpful discussion on necessary conditions for
exact recovery. BK thanks P.~Bickel for his encouragement to
pursue the confidence-sets-from-credible-sets question.

\section{The planted bi-section model, posteriors and tests}
\label{sec:pbm}

In this section, we introduce the model and prepare the
theorems on exact and almost-exact community detection in
the next section. We consider prior and posterior, define
metrics for community assignments, we derive posterior
concentration based on test functions and we prove the
existence of suitable test functions. 

\subsection{The planted bi-section model}
\label{sub:pbm} 

In a stochastic block model,
each vertex is assigned to one of $K\geq2$ communities through an
unobserved \emph{community assignment vector} $\theta'_n$.
Each vertex belongs to a community and any edge occurs (independently
of others) with a probability depending on the communities
of the vertices that it connects. In the \emph{planted bi-section
model}, there are only two communities ($K=2$) and, at the $n$-th
iteration ($n\geq1$), there are $2n$ vertices (labelled with
indices $1\leq i\leq 2n$), $n$ in each community,
with community assignment vector $\theta'_n\in\Theta_n'$ (with
components $\theta_{1}',\ldots,\theta_{2n}'\in\{0,1\}$), where
$\Theta'_n$ is the subset of $\{0,1\}^{2n}$ of all finite binary
sequences that contain as many ones as zeroes. Denote
that space in which the random graph $X^n$ takes its values by
$\scrX_n$ (\eg, represented by its adjacency matrix
with entries $\{X_{ij}:1\leq i,j\leq2n\}$) and its distribution by
$P_{\theta'_n,n}$. 
The ($n$-dependent) probability of an edge occuring ($X_{ij}=1$)
between vertices $1\leq i,j\leq 2n$ \emph{within the same community}
is denoted $p_n\in(0,1)$; the probability of an
edge \emph{between communities} is denoted $q_n\in(0,1)$,
\begin{equation}
  \label{eq:pbm}
  Q_n(\theta'_{n,i},\theta'_{n,j}):=P_{\theta'_n,n}(X_{ij}=1)=\begin{cases}
    \,\,p_n,&\quad\text{if $\theta_{n,i}'=\theta_{n,j}'$,}\\
    \,\,q_n,&\quad\text{if $\theta_{n,i}'\neq\theta_{n,j}'$.}
  \end{cases}
\end{equation}
Note that if $p_n=q_n$, $X^n$ is the Erd\H os-R\'enyi graph
$G(2n,p_n)$ and the community assignment $\theta'_n\in\Theta'_n$ is not
identifiable. Another identifiability issue that arises is that the model is 
invariant under interchange of community labels $0$ and $1$. This is
expressed in the parameter spaces $\Theta'_n$ through equivalence
relations: $\theta_{1,n}'\sim_n\theta_{2,n}'$, if
$\theta_{2,n}'=\neg\theta_{1,n}'$
(by component-wise negation). To prevent non-identifiability, we
parametrize the model for $X^n$ in terms of a parameter $\theta_n$
in a quotient space $\Theta_n=\Theta'_n/\sim_n$, for every $n\geq1$.
For \(\theta_n'\in\Theta_n'\) we denote the equivalence class
\(\{\theta_n',\neg\theta_n'\}\) by \(\theta_n\). Note that the
set \(\Theta_n\) can be identified with the set of partitions of
\(\set{1,\ldots,2n}\) consisting of exactly two sets with
$n$ elements, via the identification
\[
  \theta_n\longleftrightarrow\set{\set{i:\theta_{n,i}'=0},
  \set{i:\theta_{n,i}'=1}},
\]
Note that this is independent of the choice of the representation and
that $Q_n$ is well-defined on $\Theta_n\times\Theta_n$. 

Given true parameters $\theta_{0,n}\in\Theta_n$ ($n\geq1$), choose
representations \(\theta_{0,n}'\in\Theta_n'\) and 
define $Z_n(\theta_0')\subset\{1,\ldots,2n\}$ to be \emph{community zero}
(the set of all those $i$ such that $\theta_{0,i}'=0$) and call
the complement $Z^c_n(\theta_0')$ \emph{community one}. 
For the questions concerning exact recovery and detection, we
are interested in the sets $V_{n,k}'\subset\Theta_n'$, defined to
contain all those $\theta_n'$ that differ from $\theta_{0,n}'$ by
exactly $k$ exchanges of pairs: for $\theta_n'\in\Theta_n'$
we have $\theta_n'\in V_{n,k}'$, if
the set of vertices in community zero
\cf\ $\theta_{0,n}'$, $Z(\theta_{0,n}')=\{1\leq i\leq 2n: \theta_{0,n,i}'=0\}$,
from which we leave out the set of vertices in community zero
\cf\ $\theta_{n}'$, $Z(\theta_n')=\{1\leq i\leq 2n: \theta_{n,i}'=0\}$,
has $k$ elements.  Conversely, for any $\theta_{1,n}'$ and $\theta_{2,n}'$
in $\Theta_n'$, we denote the minimal number of pair-exchanges
necessary to take $\theta_{1,n}'$ into $\theta_{2,n}'$ by
$k'(\theta_{1,n}',\theta_{2,n}')$. Note that
\(k'(\theta_{1,n}',\neg\theta_{2,n}')=n-k'(\theta_{1,n}',\theta_{2,n}')\),
which leads to a metric on $\Theta_n$,
\begin{equation}
  \label{eq:definitionofk}
  k(\theta_{1,n},\theta_{2,n})=k'(\theta_{1,n}',\theta_{2,n}')
    \wedge k'(\theta_{1,n}',\neg\theta_{2,n}').
\end{equation}
Note that $k$ is independent of choice of the representations
and that $k$ takes values in \(\set{0,\ldots,\floor{n/2}}\). Now define,
\begin{equation}
\label{eq:definitionVnk}
  V_{n,k}=V_{n,k}(\theta_{0,n})=
    \set{\theta_n:k(\theta_n,\theta_{0,n})=k}=
      \set{\theta_n:\theta_n'\in V_{n,k}'},
\end{equation}
for $k\in\set{1,\ldots,\floor{n/2}}$.
Given some sequence $(k_n)$ of positive
integers we then define $V_n$ as the disjoint union,
\begin{equation}
  \label{eq:cupVnk}
  V_n = \bigcup_{k=k_n}^{\floor{n/2}} V_{n,k}.
\end{equation} 
Since we can choose two subsets of $k$ elements from two
sets of size $n$ in $\binom{n}{k}^2$ ways, the cardinality of $V_{n,k}$
is $\binom{n}{k}^2$, when \(k<n/2\) and \(\ft12\binom n{n/2}^2\)
when \(n\) is even and \(k=n/2\). In both cases the number of
elements in \(V_{n,k}\) is therefore bounded by \(\binom nk^2\). 

With that perspective on the parameter in mind, we note that the
likelihood with observed graph $X^n$ is given by,
\begin{equation}
  \label{eq:pbmlikelihood}
  p_{\theta,n}(X^n)=\prod_{i<j} Q_n(\theta_{n,i},\theta_{n,j})^{X_{ij}}
    (1-Q_n(\theta_{n,i},\theta_{n,j}))^{1-X_{ij}}.
\end{equation}
For the \emph{sparse versions} of the planted bi-section model,
we also define edge probabilities that vanish with growing $n$:
take $(a_n)$ and $(b_n)$ such that $a_n\log n=np_n$ and
$b_n\log n=nq_n$ for the so-called \emph{Chernoff-Hellinger phase};
take $(c_n)$ and $(d_n)$ such that $c_n=np_n$ and $d_n=nq_n$ for
the so-called \emph{Kesten-Stigum phase}. The fact that we do not
allow loops (edges that connect vertices with themselves) leaves
room for $2\cdot\ft12n(n-1)+n^2=2n^2-n=\ft12\cdot(2n)(2n-1)$
possible edges in the random graph $X^n$ observed at iteration
$n$.  

Our first statistical question of interest is reconstruction of
the (frequentist) true community assignment vectors $\theta_n$
\emph{consistently}, that is, (close to) correctly with
probability growing to one as $n$ tends to infinity. Consistency
can be formulated in various ways, and we consider two of those
formulations below.
\begin{definition}
\label{def:exact}
Let $\theta_{0,n}\in\Theta_n$ be given. An estimator sequence
$\hat{\theta}_n:\scrX_n\to\Theta_n$ is said to \emph{recover the
community assignment $\theta_{0,n}$ exactly} if,
\[
  P_{\theta_{0,n}}\bigl(\,\hat{\theta}_n(X^n)=\theta_{0,n}\,\bigr)\to1,
\]
as $n$ tends to infinity, that is, if $\hat{\theta}_n$ coincides
with the correct community assignment vector with high probability. 
\end{definition} 
We also relax this consistency requirement somewhat in the form
of the following definition, \cf\ \cite{Mossel16} and others: for
$n\geq1$ and two community assignments $\theta_{0,n},\theta_n\in\Theta_n$,
let $k(\theta_n,\theta_{0,n})$ denote the \emph{minimal number
of pair exchanges needed to
transform $\theta_n$ into $\theta_{0,n}$} (for further details, see
the definition of $k$, just before eq. (\ref{eq:cupVnk}) below).
\begin{definition}
\label{def:detect}
Let $\theta_{0,n}\in\Theta_n$ be given. An estimator sequence
$\hat{\theta}_n:\scrX_n\to\Theta_n$ is said to \emph{recover
$\tht_{0,n}$ almost-exactly}, if $k(\hat{\theta}_{n},\theta_{0,n})$
is of order $o_{P}(n)$ under $P_{\theta_{0,n}}$.
If, for some sequence $l_n=o(n)$,
\[
  P_{\theta_{0,n}}\bigl(\,
    k(\hat{\theta}_{n},\theta_{0,n}) \leq l_n
    \,\bigr)\to 1,
\]
as $n$ tends to infinity, we say that \emph{$\hat{\theta}_n$ recovers
$\theta_{0,n}$ with error rate $l_n$}.
\end{definition}

The second statistical problem we study is posterior-based, asymptotic,
frequentist uncertainty quantification for the community assignment
vector. We recall the central definition and its asymptotic version
for later reference.
\begin{definition}
\label{def:confset}
For fixed $n\geq1$ and some $0<a<1$, a set-valued map
$x^n\mapsto C(x^n)$ defined on $\scrX_n$ such that, for all
$\theta_n\in\Theta_n$, $P_{\theta_n,n}(\theta_n\in C(X^n))\geq 1-a$,
is called a confidence set of level $1-a$. If the levels $1-a_n$
of $n$-dependent confidence sets $C_n(X^n)$ go to $1$ as $n$
tends to infinity, the $C_n(X^n)$ are said to be asymptotically
consistent.
\end{definition}
Bayesian notions of uncertainty quantification, in particular
credible sets, and the way in which they are enlarged to form confidence
sets, is discussed in \cref{app:PBMrc}. 

\subsection{Prior, posterior and test functions}
\label{sub:pbmpriorposterior} 

Consider the sequence of experiments in which we observe random
graphs $X^n\in\scrX_n$ generated by the planted bi-section model
of definition \cref{eq:pbm}. In much of the literature on the
stochastic block model, the Bayesian approach is chosen: we pick
prior distributions $\pi_n$ for all $\Theta_n$, ($n\geq1$)
and calculate the posterior distribution: denoting the likelihood
by $p_{\theta,n}(X^n)$, the posterior for the parameter $\theta_n$ is
written as a fraction of sums, for all $A\subset\Theta_n$,
\[
  \Pi(A|X^n)={\displaystyle \sum_{\theta_n\in A}
    p_{\theta,n}(X^n)\, \pi_n(\theta_n)}
    \biggm/
  {\displaystyle \sum_{\theta_n\in\Theta_n}
    p_{\theta,n}(X^n)\, \pi_n(\theta_n)},
\]
where $\pi_n:\Theta_n\to[0,1]$ is the probability mass function for
the prior $\Pi_n$. Below we only consider \emph{uniform priors $(\Pi_n)$}
for $\theta_n\in\Theta_n$, so for all $n\geq1$ and $\theta_n\in\Theta_n$,
$\pi(\theta_n)= \pi_n:=(|\Theta_n|)^{-1}$.
\begin{remark}
To motivate our choice for a \emph{uniform prior}, note that for
community detection in the planted bi-section model, all values
of the community assignment are equivalent, in the sense that the
statistical problem is invariant under permutation of the vertices.
So, non-uniformity of the prior would imply a strictly subjective
bias, which has no place in the application of posteriors for
\emph{frequentist} inference. Additionally, non-uniformity of the
prior, although it would raise posterior concentration of mass at
particular values of the community assignment vector, would also
go at the expense of posterior mass at certain other values. Our
limits for posterior concentration are formulated \emph{for all
values of the community assignment}, so non-uniformity can only
make assertions weaker and conditions stronger.  
\end{remark}

According to lemma~2.2 in \cite{Kleijn21} (with $B_n=\{\theta_{0,n}\}$),
for any measurable sequence $\phi_{k,n}:\scrX_n\to[0,1]$
($k\geq1,n\geq1$) (following \cite{LeCam86}, we refer to such
functions as \emph{test functions} in what follows), we have,
\[
  \begin{split}
  P_{\theta_0,n}&\Pi(V_n|X^n)=\sum_{k=k_n}^{\floor{n/2}}
    P_{\theta_0,n}\Pi(V_{n,k}|X^n)\\
    &\leq \sum_{k=k_n}^{\floor{n/2}} 
    \Bigl( P_{\theta_0,n}\phi_{k,n}(X^n)
      + \sum_{\theta_n\in V_{n,k}}
      P_{\theta_n,n}(1-\phi_{k,n}(X^n)),
    \Bigr)
  \end{split}
\]
for every $n\geq1$. Suppose that for any $k\geq1$
there exists a sequence $(a_{n,k})_{n\geq1}$,
$a_{n,k}\downarrow0$ and, for any $\theta_n\in V_{n,k}$,
a test function $\phi_{\theta_n,n}$ that distinguishes $\theta_{0,n}$
from $\theta_n$ as follows,
\begin{equation}
  \label{eq:pttopttest}
  P_{\theta_0,n}\phi_{\theta_n,n}(X^n)+
    P_{\theta_n,n}(1-\phi_{\theta_n,n}(X^n)) \leq a_{n,k},
\end{equation}
for all $n\geq1$. Then using test functions 
$\phi_{k,n}(X^n)=\max\{\phi_{\theta_n,n}(X^n):\theta_n\in V_{n,k}\}$,
as well as the fact that,
\[
  P_{\theta_0,n}\phi_{k,n}(X^n)\le
    \sum_{\theta_n\in V_{n,k}} P_{\theta_0,n}\phi_{\theta_n,n}(X^n),
\]
we see that,
\begin{equation}
  \label{eq:pbmposteriorbound}
  \begin{split}
  P_{\theta_0,n}\Pi(V_n|X^n)
  & \leq \sum_{k=k_n}^{\floor{n/2}} \sum_{\theta_n\in V_{n,k}}
      \Bigl( P_{\theta_{0,n},n}\phi_{\theta_n,n}(X^n)
        +
      P_{\theta_n,n}(1-\phi_{\theta_n,n}(X^n))\Bigr)\\
  & \leq \sum_{k=k_n}^{\floor{n/2}} \binom{n}{k}^2 a_{k,n}.
  \end{split}
\end{equation}
This inequality forms the basis for the results in the next section
on exact recovery and almost-exact recovery.

\subsection{Existence of suitable test functions}
\label{sub:existenceoftests}

Given $n\geq1$ and two community assignment vectors
$\theta_{0,n},\theta_n\in\Theta_n$,
we are interested in calculation of the likelihood ratio
$dP_{\theta,n}/dP_{\theta_0,n}$, because it determines testing power
as well as the various forms of remote contiguity that
play a role.

Choose representations \(\theta_0'\) of \(\theta_0\) and
\(\theta'\) of \(\theta\) so that
\(k'(\theta_0',\theta')=k(\theta_0,\theta)\), where \(k\) and
\(k'\) are as in \cref{sec:pbmrecovery}. 
Recall that, $Z_n(\theta_0')\subset\{1,\ldots,2n\}$ is
community zero and the complement $Z^c_n(\theta_0')$ community one. For the sake
of presentation (in \cref{fig:thetanull} below), re-label the
vertices such that $Z(\theta_0')=\{1,\ldots,n\}$
and $Z^c(\theta_0')=\{n+1,\ldots,2n\}$. In the case $n=4$,
\cref{fig:thetanull} shows edge probabilities in the familiar
block arrangement.
\begin{figure}[bht]
 \begin{center}
   \parbox{0.75\textwidth}{\hspace*{-14ex}
     \begin{lpic}{thetas(0.75)}
       \lbl[t]{15,45;{$Z(\tht_{0,n}')$}}
       \lbl[t]{14,22;{$Z^c(\tht_{0,n}')$}}
       \lbl[t]{40,64;{$Z(\tht_{0,n}')$}}
       \lbl[t]{63,64;{$Z^c(\tht_{0,n}')$}}
       \lbl[t]{91,43;{$Z(\tht_n')$}}
       \lbl[t]{90,26;{$Z^c(\tht_n')$}}
       \lbl[t]{119,67;{$Z(\tht_n')$}}
       \lbl[t]{136,67;{$Z^c(\tht_n')$}}
     \end{lpic}
\caption{\label{fig:thetanull} Community assignments and edge
    probabilities according to $\tht_{0,n}'$ and to $\tht_n'$ for $n=4$
    and $k=1$. Vertex sets $Z(\cdot)$ and $Z^c(\cdot)$ correspond
    to communities zero and one for the given community assignment. Dark
    squares correspond to edges that occur with (within-community)
    probability $p_n$, and light squares to edges that occur with
    (between-community) probability $q_n$.}
  }
 \end{center}
\end{figure}

The likelihood under $\theta_0$ is given by equation
\cref{eq:pbmlikelihood}, with $\theta=\theta_0$. If we assume that
$\theta_{0,n}'$ and
$\theta_n'$ differ by $k$ pair-exchanges among respective members
of communities zero and one, then a look at \cref{fig:thetanull}
reveals that the likelihood-ratio depends only on the edges for
which exactly one of its end-points changes community. Define, 
\[
  \begin{split}
    A_n&=\{(i,j)\in\{1,\ldots,2n\}^2:\,i<j,\,\theta_{0,n,i}'=\theta_{0,n,j}',\,
      \theta_{n,i}'\neq\theta_{n,j}'\},\\
    B_n&=\{(i,j)\in\{1,\ldots,2n\}^2:\,i<j,\,\theta_{0,n,i}'\neq\theta_{0,n,j}'
      ,\,\theta_{n,i}'=\theta_{n,j}'\}.
  \end{split}
\]
Also define,
\[
  (S_n,T_n):=\Bigl(\sum\{X_{ij}:(i,j)\in A_n\},
    \sum \{X_{ij}:(i,j)\in B_n\}\Bigr),
\]
and note that the likelihood ratio can be written as,
\begin{equation}
  \label{eq:pbmlikratio}
  \frac{p_{\theta,n}}{p_{\theta_0,n}}(X^n)
  = \biggl(\frac{1-p_n}{p_n}\,\frac{q_n}{1-q_n}\biggr)^{S_n-T_n},
\end{equation}
where,
\begin{equation}
  \label{eq:SnTn}
  (S_n,T_n)\sim\begin{cases}
  \text{Bin}(2k(n-k),p_n)\times\text{Bin}(2k(n-k),q_n),
    \quad\text{if $X^n\sim P_{\theta_0,n}$},\\[2mm]
  \text{Bin}(2k(n-k),q_n)\times\text{Bin}(2k(n-k),p_n),
    \quad\text{if $X^n\sim P_{\theta,n}$}.
  \end{cases}
\end{equation}
Based on that, we derive the following lemma.
\begin{lemma}
\label{lem:testingpower}
Let $n\geq1$, $\theta_{0,n},\theta_n\in\Theta_n$ be given. Assume that
$\theta_{0,n}$ and $\theta_n$ differ by $k$ pair-exchanges. Then there
exists a test function $\phi_n:\scrX_n\to[0,1]$ such that,
\[
  P_{\theta_0,n}\phi_n(X^n) + P_{\theta,n}(1-\phi_n(X^n))\leq a_{n,k},
\]
with testing power,
\[
  a_{n,k}=\bigl( 1-p_n-q_n+2p_n\,q_n
    +2\sqrt{p_n(1-p_n)}\sqrt{q_n(1-q_n)})\bigr)^{2k(n-k)}.
\]
\end{lemma}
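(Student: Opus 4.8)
The plan is to take $\phi_n$ to be a likelihood ratio test between $P_{\theta_0,n}$ and $P_{\theta,n}$ and to bound its total error by the Hellinger (Bhattacharyya) affinity $\rho:=\int\sqrt{dP_{\theta_0,n}\,dP_{\theta,n}}$, which I then compute in closed form and show equals $a_{n,k}$. For the first step, recall the elementary fact that for any two probabilities $P,Q$ with densities $p,q$ with respect to a common dominating measure $\mu$, the test $1\{q>p\}$ attains the minimum total error, so that
\[
  \inf_\psi\bigl(P\psi + Q(1-\psi)\bigr)
    = \int (p\wedge q)\,d\mu
    \le \int\sqrt{pq}\,d\mu,
\]
the last inequality being the pointwise bound $a\wedge b\le\sqrt{ab}$ for $a,b\ge0$. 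Applied with $P=P_{\theta_0,n}$ and $Q=P_{\theta,n}$ (both mutually absolutely continuous since $p_n,q_n\in(0,1)$), this produces a test $\phi_n$ — which by \cref{eq:pbmlikratio} is just a threshold in $S_n-T_n$ — satisfying $P_{\theta_0,n}\phi_n(X^n)+P_{\theta,n}(1-\phi_n(X^n))\le\rho$, so the lemma reduces to the identity $\rho=a_{n,k}$.

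For the second step I would use that, by \cref{eq:pbmlikratio}, $dP_{\theta_0,n}/dP_{\theta,n}$ is a function of $(S_n,T_n)$ alone, so that $\rho = P_{\theta,n}\bigl(\sqrt{dP_{\theta_0,n}/dP_{\theta,n}}\bigr)$ depends on $P_{\theta,n}$ only through the law of $(S_n,T_n)$ and therefore equals the affinity between the two laws listed in \cref{eq:SnTn}. Each of those is a product of two independent binomials on $m:=2k(n-k)$ trials, so $\rho$ factors as a product of two affinities, each between $\text{Bin}(m,p_n)$ and $\text{Bin}(m,q_n)$; and the affinity between $\text{Bin}(m,a)$ and $\text{Bin}(m,b)$ is $(\sqrt{ab}+\sqrt{(1-a)(1-b)})^m$ by the binomial theorem. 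Hence
\[
  \rho = \bigl(\sqrt{p_nq_n}+\sqrt{(1-p_n)(1-q_n)}\bigr)^{2m},
\]
and expanding $(\sqrt{p_nq_n}+\sqrt{(1-p_n)(1-q_n)})^2 = 1-p_n-q_n+2p_nq_n+2\sqrt{p_n(1-p_n)}\sqrt{q_n(1-q_n)}$ while noting that $2m = 4k(n-k)$ gives exactly $\rho=a_{n,k}$.

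None of the individual steps is hard; the one place that deserves a word of care is the reduction of the affinity on $\scrX_n$ to an affinity on the two-dimensional range of $(S_n,T_n)$: one must observe that no affinity is lost under this coarsening, which holds because $(S_n,T_n)$ is sufficient for the pair $\{P_{\theta_0,n},P_{\theta,n}\}$ — equivalently because the likelihood ratio \cref{eq:pbmlikratio} factors through $(S_n,T_n)$. The remaining ingredients — the pointwise bound $a\wedge b\le\sqrt{ab}$, multiplicativity of the affinity over product measures, and the binomial theorem — are all routine.
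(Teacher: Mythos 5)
Your proof is correct and follows essentially the same route as the paper's: a likelihood-ratio (minimum-error) test whose total error is bounded by the Bhattacharyya/Hellinger affinity, which is then computed exactly through the binomial law of $(S_n,T_n)$ from \cref{eq:pbmlikratio} and \cref{eq:SnTn}. The only differences are cosmetic — you evaluate the affinity via sufficiency, multiplicativity and the binomial theorem, where the paper quotes the Hellinger-transform bound at $\alpha=\ft12$ and uses the binomial moment-generating function — and both yield the same closed form $a_{n,k}$.
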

\begin{proof}
The likelihood ratio test $\phi_n(X^n)$ has testing power bounded by
the so-called Hellinger transform,
\[
  P_{\theta_0,n}\phi_n(X^n) + P_{\theta,n}(1-\phi_n(X^n))\leq
    \inf_{0\leq\al\leq1}
    P_{\theta_0,n}\Bigl(\frac{p_{\theta,n}}{p_{\theta_0,n}}(X^n)\Bigr)^\al,
\]
(see, \eg, \cite{LeCam86} and proposition~2.6 in \cite{Kleijn21}).
Using $\al=1/2$ (which is the minimum) and the independence of $S$ and
$T$, we find that,
\[
  P_{\theta_0,n}\biggl(\frac{p_{\theta,n}}{p_{\theta_0,n}}(X^n)\biggr)^{\ft12}
    = P_{\theta_0,n}
    \biggl(\frac{p_n}{1-p_n}\,\frac{1-q_n}{q_n}\biggr)^{\ft12(T_n-S_n)}
    = Pe^{\ft12\lambda_nS_n}\,Pe^{-\ft12\lambda_nT_n},
\]
where $\lambda_n:=\log(1-p_n)-\log(p_n)+\log(q_n)-\log(1-q_n)$ and
$(S_n,T_n)$ are distributed binomially, as in the first case of
\cref{eq:SnTn}. Using the moment-generating function of the
binomial distribution, we conclude that,
\[
  \begin{split}
  P_{\theta_0,n}&\biggl(\frac{p_{\theta,n}}
    {p_{\theta_0,n}}(X^n)\biggr)^{1/2}\\
  &= \biggl(\Bigl(1-p_n
    +p_n\Bigl(\frac{1-p_n}{p_n}\,\frac{q_n}{1-q_n}\Bigr)^{1/2}
    \Bigr)\\
  & \qquad\qquad\times\Bigl(1-q_n
      +q_n\Bigl(\frac{p_n}{1-p_n}\,\frac{1-q_n}{q_n}\Bigr)^{1/2}
      \Bigr)\biggr)^{2k(n-k)}\\
  &= \biggl(\Bigl(
    (1-p_n)+p_n^{1/2}q_n^{1/2}\Bigl(\frac{1-p_n}{1-q_n}\Bigr)^{1/2}
    \Bigr)\\
  & \qquad\qquad\times\Bigl((1-q_n)+p_n^{1/2}q_n^{1/2}
      \Bigl(\frac{1-q_n}{1-p_n}\Bigr)^{1/2}
      \Bigr)\biggr)^{2k(n-k)}\\[1mm]
  &= \Bigl((1-p_n)(1-q_n) + 2 \bigl(p_nq_n
    (1-p_n)(1-q_n)\bigr)^{1/2} + p_nq_n \Bigr)^{2k(n-k)},
  \end{split}
\]
which proves the assertion.
\end{proof}


\section{Exact and almost-exact posterior recovery of communities}
\label{sec:pbmrecovery}

In this section, we combine inequality \cref{eq:pbmposteriorbound}
with the test functions of subsection~\ref{sub:existenceoftests}
to arrive at two posterior concentration results, for exact and
almost-exact recovery of the community structure.

\subsection{Posterior consistency: exact recovery}
\label{sub:exacrrecovery}

For exact recovery, we are interested in the
expected posterior masses of subsets of $\Theta_n$ of the form:
\[
  V_n = \{ \theta_n\in\Theta_n: \theta_n\neq\theta_{0,n} \}
  = \bigcup_{k=1}^{\floor{n/2}} V_{n,k}.
\]
The theorem states a sufficient condition for $(p_n)$ and $(q_n)$,
which is related to requirement \cref{eq:mnscritical} in the
Chernoff-Hellinger phase.
\begin{theorem}
\label{thm:pbmexact}
For some $\theta_{0,n}\in\Theta_n$, assume that
$X^n\sim P_{\theta_0,n}$, for every $n\geq1$. If we equip
every $\Theta_n$ with its uniform prior and $p_n=a_nn^{-1}\log n$
and $q_n=b_nn^{-1}\log n$ are of order $O(n^{-1/2})$,
with $(a_n)$, $(b_n)$ such that,
\begin{equation}
  \label{eq:pbmpnqn}
 (a_n + b_n - 2\sqrt{a_nb_n} -1 ) \log n \to \infty,
\end{equation}
then,
\begin{equation}
  \label{eq:pbmexact}
  \Pi\bigl(\,\theta_n=\theta_{0,n}\bigm|X^n\,\bigr) \convprob{P_{\tht_{0,n}}} 1,
\end{equation}
as $n$ tends to infinity, \ie, the posterior recovers the community
assignment exactly.
\end{theorem}
\begin{proof}
We first give an alternative formulation of condition \cref{eq:pbmexact}
that is more suitable for the proof. Let $\delta_n = a_n+b_n-
2\sqrt{a_nb_n}-1=(\sqrt{a_n}-\sqrt{b_n})^2-1$, and \cref{eq:pbmexact}
implies that $\delta_n\log n\to\infty$. In particular, for large enough $n$,
$0<\delta_n<1$. Conversely, assume that, there is a sequence $\delta_n$
such that for $n$ sufficiently large, $0<\delta_n<1$,
$\delta_n\log n\to\infty$ as $n$ tends to infinity, and,
\begin{equation}
  \label{eq:equivalentcondition}
  (\sqrt{a_n}-\sqrt{b_n})^2 \geq 1+\delta_n, 
\end{equation}
for $n$ sufficiently large. Conclude that \cref{eq:pbmpnqn} is
equivalent to \cref{eq:equivalentcondition}.

According to \cref{lem:testingpower}, for every $n\geq1$, $k\geq1$
and given $\theta_{0,n}$, there exists a test sequence satisfying
\cref{eq:pttopttest} with $a_{n,k}=(1-\mu_n)^{2k(n-k)}$,
where,
\[
  \mu_n=p_n+q_n-2p_n\,q_n -2(p_n(1-p_n)q_n(1-q_n))^{1/2},
\]
in $[0,1]$. Start by noting that $\mu_n \geq p_n+q_n -2p_nq_n
-2\sqrt{p_n}\sqrt{q_n} = (\sqrt{p_n}-\sqrt{q_n})^2 -2p_nq_n$, 
so that assumption \cref{eq:equivalentcondition} implies that,
\begin{equation}
  1-\mu_n \leq 1-(1+\delta_n)\frac{\log n}{n}+\frac{2a_nb_n(\log n)^2}{n^2}.
\end{equation}
It follows from the assumption that $p_n$ and $q_n$ are of order
$O(n^{-1/2})$ and $\delta_n \log n\to\infty$, that that last term
on the right-hand side is smaller than $\frac12\delta_n\log n$
for large enough $n$. Therefore, $1-\mu_n \leq 1-(1+\delta_n/2)n^{-1}\log n$,
and \cref{lem:oneplusxdivrtothepowerrissmallerthanetothepowerx} says that,
\begin{equation}
    (1-\mu_n)^n \leq e^{-(1+\delta_n/2)\log n}. 
\end{equation}

Using that $\binom nk^2\leq \binom{2n}{2k}$ in
\cref{eq:pbmposteriorbound}, we find,
\begin{equation}
  \label{eq:twosums}
  \begin{split}
  P_{\theta_0,n}&\Pi(V_n|X^n)\\[2mm]
    &\leq\sum_{k=1}^{\floor{n/2}}\binom{n}{k}^2(1-\mu_n)^{2k(n-k)}
       \leq\sum_{k=1}^{\floor{n/2}}\binom{2n}{2k}(1-\mu_n)^{2k(n-k)}\\
    &\leq\sum_{\ell=2}^{2\floor{n/2}}\binom{2n}{\ell}(1-\mu_n)^{\ell(n-\ell/2)}
       \leq\sum_{k=1}^{n}\binom{2n}{k}(1-\mu_n)^{k(n-k/2)}\\
    &\leq\sum_{k=1}^{\floor{\delta_n n/2}}\binom{2n}{k}(1-\mu_n)^{k(n-k/2)}
       + \sum_{\ceil{\delta_n n/2}}^{n}\binom{2n}{k}(1-\mu_n)^{k(n-k/2)}.
  \end{split}
\end{equation}
We will analyse the two sums on the right-hand side separately.
Regarding the first sum, note that $n-k/2\geq (1-\delta_n/4)n$ for all
integers $1\leq k \leq \delta_n n/2$, so, 
\begin{equation}
  \begin{split}
    \sum_{k=1}^{\floor{\delta_n n/2}}&\binom{2n}{k}(1-\mu_n)^{k(n-k/2)}
      \leq\sum_{k=1}^{2n}\binom{2n}{k}(1-\mu_n)^{k(1-\delta_n/4)n}\\
    &\leq\sum_{k=1}^{2n}\binom{2n}k e^{-k(1-\delta_n/4)(1+\delta_n/2)\log n}  
    \leq\sum_{k=1}^{2n} \binom{2n}k e^{-k(1+\delta_n/8)\log n},
  \end{split}
\end{equation}
where we use that $(1-\delta_n/4)(1+\delta_n/2)\geq 1+\delta_n/8$.
By the binomial theorem and
\cref{lem:oneplusxdivrtothepowerrissmallerthanetothepowerx} the first
sum in \cref{eq:twosums} is bounded by,
\begin{equation}
  \label{eq:pbmfirstsum}
  \bigl(1+e^{-(1+\delta_n/8)\log n}\bigr)^{2n} - 1
    \leq e^{2ne^{-(1+\delta_n/8)\log n}} -1
    = e^{2e^{-(\delta_n/8)\log n}} -1\to 0,  
\end{equation}
as $n$ tends to infinity.

Regarding the second sum on the right-hand side of \cref{eq:twosums},
it is noted that $x(1-x)$ attains its minimum at $x=a$ on the interval
$[a,1/2], 0<a<1/2$, so that $k(n-k/2)=2n^2(k/(2n))(1-k/(2n))\geq
\delta_n(1-\delta_n/4)n^2/2$, for all integers $\delta_nn/2\leq k\leq n$.
Therefore, 
\begin{equation}
  (1-\mu_n)^{k(n-k/2)} \leq (1-\mu_n)^{\delta_n(1-\delta_n/4)n^2/2}
    \leq e^{-\frac12n\delta_n(1-\delta_n/4)(1+\delta_n/2)\log n }.
\end{equation} 
Substituting, we find, 
\begin{equation}
  \begin{split}
    \sum_{\ceil{\delta_n n/2}}^{2n} &\binom{2n}{k} (1-\mu_n)^{k(n-k/2)}
      \leq e^{-\frac12n\delta_n(1-\delta_n/4)(1+\delta_n/2)\log n }
        \sum_{k=0}^{2n}\binom{2n}k\\
    & = e^{2n\log 2-\frac12n\delta_n(1-\delta_n/4)(1+\delta_n/2)\log n}\to 0,
  \end{split} 
\end{equation}
as $n$ tends to infinity. The latter limit and \cref{eq:pbmfirstsum} prove
the assertion. 
\end{proof}

Up to the $\frac12\log\log(n)$-term, condition \cref{eq:pbmpnqn}
is equal to (the necessary and sufficient) condition \cref{eq:mnscritical}
of \cite{Mossel16}. In fact there is a trade-off: \cref{eq:pbmpnqn} is
slightly weaker than \cref{eq:mnscritical}, but \cref{eq:mnscritical}
applies only if there exists a $C>0$ such that $C^{-1}\leq a_n, b_n
\leq C$ for large enough $n$ \citep{Mossel16,Zhang16}. This bound
excludes some interesting examples in which one of the sequences
$(a_n)$ and $(b_n)$ may fade away with growing $n$ or equal zero
outright. For instance, if $b_n=0$ and $\liminf_na_n>1$, edges
between communities are completely absent but, separately, the
Erd\H os-R\'{e}nyi graphs spanned by vertices in $Z_n(\theta_0')$
and $Z^c_n(\theta_0')$ respectively are connected with high
probability. Similarly, if $a_n=0$ and $\liminf_nb_n>1$,
the posterior succeeds in exact recovery: possibly
with $b_n$ above $1$, edges between communities are abundant enough
to guarantee the existence of a path in $X^n$ that visits all
vertices at least once, with high probability. It is tempting to state
the following, well-known \citep{Abbe16,Abbe18,Mossel16} sufficient
condition for the sequences $a_n>0$ and $b_n>0$:
\begin{equation}
  \label{eq:pbmpnqnsimple}
  \text{\it $(\sqrt{a_n}-\sqrt{b_n})^2>c$,
  for some $c>1$ and $n$ large enough,}
\end{equation}
(even though it ignores the logarithm in \cref{eq:mnscritical}).
Figure~\ref{fig:phase} provides a `phase diagram' delineating the
choices for $(a,b)$ such that $a_n=a$ and $b_n=b$ leads to exact
recovery (analogous to \cite[theorem~3]{Abbe18}).
\begin{figure}[bht]
  \begin{center}
    \parbox{0.75\textwidth}{
      \hspace*{-5em}
      \begin{lpic}{Phase-diagram-exact-recovery(0.54)}
        \lbl[t]{30,78;{\Large $b$}}
        \lbl[t]{112,0;{\Large $a$}}
      \end{lpic}
      \caption{\label{fig:phase}The phase diagram for the sparse
      planted bi-section model in the Chernoff-Hellinger phase: values
      of $(a,b)$ that are too close to the diagonal, resemble the
      Erd\H os-R\'enyi model (in which there are no communities)
      too closely, rendering the community structure
      non-exactly-recoverable.}
    }
  \end{center}
\end{figure}
\begin{corollary}
Under the conditions of \cref{thm:pbmexact}, the
MAP-/ML-estimator recovers $\theta_{0,n}$ exactly.
\end{corollary}
\begin{proof}
Due to the uniformity of the prior, for every $n\geq1$,
maximization of the posterior density (with respect to the
counting measure) on $\Theta_n$, is the same as maximization of
the likelihood. Due to \cref{eq:pbmexact}, the posterior densities
in the points $\theta_{0,n}$ in $\Theta_n$ converge to one in
$P_{\theta_0,n}$-probability. Accordingly, the point of maximization
is $\theta_{0,n}$ with high probability.
\end{proof}

\subsection{Posterior consistency: almost-exact recovery}
\label{sub:almostexacrrecovery}

For the case of almost-exact recovery, the requirement
of convergence is less stringent: as said, \cite[proposition 2.9]%
{Mossel16} states that condition \cref{eq:MNSdetect} is
\emph{necessary and sufficient} for almost-exact recovery.
Below we show that posteriors with uniform priors \emph{recover}
the true community assignment \emph{almost exactly} if
\cref{eq:MNSdetect} holds.

We are interested in the expected posterior masses of subsets
of $\Theta_n$ of the form:
\[
  W_n = \bigcup_{k=k_n}^{\floor{n/2}} V_{n,k},
\]
for a sequence $k_n$ of order $o(n)$ or $O(n)$: the
posterior concentrates on community assignments $\theta_n$ that
differ from $\theta_{0,n}$ by no more than $k_n$ pair exchanges.
\begin{theorem}
\label{thm:pbmdetect}
For some $\theta_{0,n}\in\Theta_n$, let
$X^n\sim P_{\theta_0,n}$ for every $n\geq1$. If we equip
all $\Theta_n$ with uniform priors and edge-probabilities $(p_n)$,
$(q_n)$ and error rates $(k_n)$ are such that,
\begin{equation}
  \label{eq:pbmdetectcritical}
  \frac n{k_n}\Big(1-p_n-q_n+2p_n\,q_n
    +2\sqrt{p_n(1-p_n)q_n(1-q_n)}\Big)^{n/2}\to 0,
\end{equation}
as $n$ tends to infinity, then,
\begin{equation}
  \label{eq:pbmdetect}
  \Pi(W_n|X^n) \convprob{P_0} 0,
\end{equation}
as $n$ tends to infinity, \ie, the posterior recovers $\theta_{0,n}$
with error rate $k_n$.
\end{theorem}
\begin{proof}
According to \cref{lem:testingpower}, for every $n\geq1$, $k\geq1$
and given $\theta_{0,n}$, there exists a test sequence satisfying
\cref{eq:pttopttest} with $a_{n,k}=(1-\mu_n)^{2k(n-k)}$.
Therefore, using the inequalities
\(\binom{2n}k\leq \frac{(2n)^k}{k!}\) and \((n+m)!\geq n!m!\), the
Stirling lower bound formula, and finally our assumption
\(n(1-\mu_n)^{n/2}/k_n\to 0\) ($\mu_n$ as defined in the proof of
\cref{thm:pbmexact}, we see that for big enough \(n\),
\[
\begin{split}
  P_{\theta_0,n}\Pi(W_n&|X^n)
    \leq \sum_{k=k_n}^{\floor{n/2}}
      \binom{n}{k}^2 (1-\mu_n)^{2k(n-k)}\\[2mm]
    &\leq \sum_{k=2k_n}^n \binom{2n}{k} (1-\mu_n)^{k(n-k/2)}
    \leq \sum_{k=2k_n}^\infty \frac{1}{k!}  (2n)^k (1-\mu_n)^{kn/2}\\[2mm]    
    &\leq \frac{\bigl(2n(1-\mu_n)^{n/2}\bigr)^{2k_n}}{(2k_n)!}
      e^{2n(1-\mu_n)^{n/2}}.
\end{split}
\]
It then follows that,
\[
\begin{split}
  P_{\theta_0,n}\Pi(W_n&|X^n)
    \leq \frac1{\sqrt{4\pi k_n}}
      \biggl(\frac{n(1-\mu_n)^{n/2}}{k_n}\biggr)^{2k_n}
        e^{2k_n +  2n(1-\mu_n)^{n/2}}\\[2mm]
    &\leq\frac1{\sqrt{4\pi k_n}}\biggl(\frac{n(1-\mu_n)^{n/2}} {k_n}
      e^{1 + n(1-\mu_n)^{n/2}/k_n}\biggr)^{2k_n}\\[2mm]
    &\leq \frac{n(1-\mu_n)^{n/2}} {k_n}
      e^{1 + n(1-\mu_n)^{n/2}/k_n},
\end{split}
\]
which converges to zero as \(n\to\infty\). 
\end{proof}

\begin{example}
\label{ex:sparseexactrecovery}
Note that if \(p_n,q_n=O(n^{-1})=o(1),\) we may expand,
\[
  \sqrt{p_n}-\sqrt{q_n} =
  \frac1{2\sqrt{\frac12(p_n+q_n)}}(p_n-q_n)+O(|p_n-q_n|^2),
\]
which means that,
\[
  \mu_n=(\sqrt{p_n}-\sqrt{q_n})^2+O(n^{-2})
  =\frac{(p_n-q_n)^2}{2(p_n+q_n)}  + O(n^{-2}).
\]
Assuming only that $n(p_n-q_n)^2 > 2 (p_n+q_n)$, as in
\cite{Decelle11a,Decelle11b}, we would arrive at the
conclusion that $n\mu_n>1+O(n^{-1})$, which is insufficient in the proof
of \cref{thm:pbmdetect}. Note that a non-divergent choice
$k_n = O(1)$ forces us back into the Chernoff-Hellinger phase where
exact recovery is possible.
\end{example}
\begin{corollary}
\label{cor:pbmweakdetectcritical}
Under the conditions of \cref{thm:pbmdetect} with $(p_n)$ and
$(q_n)$ such that,
\begin{equation}
  \label{eq:pbmweakdetectcritical} n
  \Bigl(p_n+q_n-2p_n\,q_n
    -2\sqrt{p_n(1-p_n)q_n(1-q_n)} \Bigr)\to \infty,
\end{equation}
as $n$ tends to infinity, posteriors \emph{recover $\tht_{0,n}$ partially},
\[
  \Pi\bigl(\,k(\tht_n,\tht_{0,n})\geq \beta n\bigm| X^n\,\bigr)
    \convprob{P_0} 0,
\]
for \emph{any} fraction $\beta\in(0,\ft12)$, which implies that
the posterior recovers $\theta_{0,n}$ almost-exactly.
\end{corollary}
\begin{proof}
Let $\beta\in(0,\ft12)$ be given.
Follow the proof of \cref{thm:pbmdetect} with $k_n=\beta n$
and note that,
\[
  P_{\theta_0,n}\Pi\bigl(\,
    k(\tht_n,\tht_{0,n})\geq \beta n\bigm|X^n\,\bigr)
  \leq \frac1\beta(1-\mu_n)^{n/2} e^{1 + \beta^{-1}(1-\mu_n)^{n/2}}.
\]
Due to eq. \cref{eq:pbmweakdetectcritical},
\[
  (1-\mu_n)^{n/2}=\bigl(1-p_n-q_n+2p_n\,q_n
    +2\sqrt{p_n(1-p_n)q_n(1-q_n)}\bigr)^{n/2}\to 0,
\]
so $P_{\theta_0,n}\Pi(k(\tht_n,\tht_{0,n})\geq \beta n\bigm|X^n)\to0$.
For almost-exact recovery, let $\beta_m\downarrow0$ be given; if we
let $m(n)$ go to infinity slowly enough, posterior convergence
continues to hold with $\beta$ equal to $\beta_{m(n)}$.
\end{proof}
Condition \cref{eq:pbmweakdetectcritical} says that $n\mu_n\to\infty$
is \emph{sufficient} for almost-exact posterior recovery; but as
shown in \cite[proposition 2.10]{Mossel16}, it is \emph{also necessary}
for any form of almost-exact recovery. We conclude that if there
exist any estimators $\hat{\theta}_n$ that recover the
community assignment almost exactly, then posteriors with
uniform priors also recover the community assignment
almost-exactly.


\section{Uncertainty quantification for community structure}
\label{sec:pbmuncertainty}

Our first results on uncertainty quantification are obtained
with the help of the results in the previous section: if we know that
the sequences $(p_n)$ and $(q_n)$ satisfy requirements like
\cref{eq:pbmpnqn} or \cref{eq:pbmdetectcritical}, so that
exact or almost-exact recovery is guaranteed, then a consistent
sequence of confidence sets is easily constructed from credible
sets, as shown in \cref{sub:postconsconf} and the sizes of these
credible sets as well as the sizes of associated confidence sets
are controlled. If we cannot guarantee \cref{eq:pbmpnqn} or
\cref{eq:pbmdetectcritical}, or if we require explicit control over
confidence levels, confidence sets can still be constructed from
credible sets under conditions requiring that credible levels
grow to one quickly enough. Enlargement of credible sets may be
used to mitigate this condition, whenever we are close to the
Erd\H os-R\'enyi sub-model, as discussed in \cref{sub:credisconf}.

Regarding the sizes of credible sets, the most natural way to
compile a minimal-order credible set
$E_n(X^n)$ in a discrete space like $\Tht_n$, is to calculate the
posterior weights $\Pi(\{\tht_n\}|X^n)$ of all $\tht_n\in\Tht_n$,
order $\Tht_n$ by decreasing posterior weight into a finite sequence
$\{\tht_{n,1},\tht_{n,2},\ldots,\tht_{n,|\Tht_n|}\}$ and define
$E_n(X^n)=\{\tht_{n,1},\ldots,\tht_{n,m}\}$, for the smallest
$m\geq1$ such that $\Pi(E_n(X^n)|X^n)$
is greater than or equal to the required credible level.
To provide guarantees regarding the sizes of credible sets,
one would like to show that these $E_n(X^n)$
are of an order that is upper bounded with high probability.
(Although it is not so clear what the upper bound should be,
ideally.)

Here we shall follow a different path based on the smallest number
$k(\tht_n,\eta_n)$ of pair-exchanges between (two representations 
$\tht_n'$ and $ \eta'_n$ in $\Tht_n'$ of) \(\tht_n\) and \(\eta_n\)
respectively, see \cref{eq:definitionofk}. The map
$k:\Tht_n\times\Tht_n\to\{0,1,\ldots,
\left\lfloor{n/2}\right\rfloor\}$
is interpreted in a role similar to that of a metric on larger
parameter spaces: the \emph{diameter} $\diam_n(C)$ of a subset
$C\subset\Tht_n$ is,
\[
  \diam_n(C) = \max\bigl\{k(\tht_n,\eta_n):\tht_n,\eta_n\in C\bigr\}.
\]
by definition.

\subsection{Posterior recovery and confidence sets}
\label{sub:postconsconf}

If the posteriors concentrate amounts of
mass on $\{\tht_{0,n}\}$ arbitrarily close to one with growing $n$,
then a sequence of credible sets of a certain fixed level contains
$\tht_{0,n}$ for
large enough $n$. If such posterior concentration occurs with high
$P_{\tht_0,n}$-probability, then the sequence of credible sets is
also an asymptotically consistent sequence of confidence sets. We
formalize and prove this observation in the following theorem.
(A real-valued sequence $(c_n)$ is said to be {bounded away from
zero in the limit}, if $\liminf_nc_n>0$).
\begin{theorem}
\label{thm:postcredisconf}
Let $(c_n)$ be bounded away from zero in the limit. Suppose that
the posterior recovers the communities exactly. Then any sequence
$(D_n)$ of ($P_n^{\Pi}$-almost-sure) credible sets of levels
$c_n$ satisfies,
\[
  P_{\tht_0,n}\bigl( \,\tht_{0,n}\in D_n(X^n)\, \bigr)\to 1,
\]
\ie, $(D_n)$ is a consistent sequence of confidence sets.
Credible sets of minimal order (or diameter) equal $\{\tht_0\}$
with high $P_{\tht_{0,n}}$-probability. 
\end{theorem}
\begin{proof}
Note that with uniform priors $\Pi_n$,
$P_{\tht_0,n}\ll P_n^{\Pi}$ for all $n\geq1$, so that
$P_n^\Pi$-almost-surely defined credible sets $D_n$ of credible
level at least $\ep$, also satisfy,
\[ 
  P_{\tht_0,n}\bigl( \,\Pi(D_n(X^n)|X^n) \geq \ep \,\bigr) = 1.
\]
So if, in addition,
\[
  P_{\tht_0,n}\bigl( \,\Pi(\{\tht_{0,n}\}|X^n) > 1-\ep\,\bigr) \to 1,
\]
then $\tht_{0,n}\in D_n(X^n)$ with high $P_{\tht_0,n}$-probability.
Since all posterior mass is concentrated at $\tht_{0,n}$ with
high probability, the $\{\tht_{0,n}\}$ form a sequence of unique
credible sets of minimal order (or minimal diameter $k_n=0$) with
confidence levels greater than $\ep>0$ for large enough $n$.
\end{proof}
When only \emph{almost-}exact recovery is possible, the above
strategy to obtain confidence sets, carries over for
\emph{enlargements of credible sets}. Recall the definition
of the $V_{n,k}(\tht_n)$ in
\cref{eq:definitionVnk} (with \(\tht_{0,n}\) replaced by
\(\tht_n\)). Given some fixed underlying $\tht_{0,n}\in\Tht_n$,
we write $V_{n,k}$ for $V_{n,k}(\tht_{0,n})$. Making a certain
choice for the upper bounds $k_n\geq1$, we arrive at,
\begin{equation}
  \label{eq:differbyk}
  B_n(\tht_n)=\bigcup_{k=0}^{k_n} V_{n,k}(\tht_n),
\end{equation}
for every $n\geq1$ and $\tht_n\in\Tht_n$. Similar as for
\(V_{n,k}\) we write $B_n$ for $B_n(\tht_{0,n})$. Given a subset
$D_n$ of $\Tht_n$, the set $C_n\subset\Tht_n$ associated with $D_n$ under
$B_n(\tht_n)$ (see \cref{def:confcred}) then is the set
of $\tht_n\in\Tht_n$ whose \(k\)-distance from some element of $D_n$
is at most $k_n$,
\[
  C_n=\{\tht_n\in\Tht_n:\exists_{\eta_n\in D_n},\,k(\eta_n,\tht_n)
  \leq k_n\},
\]
the $k_n$-enlargement of $D_n$. If we
know that the sequences $(p_n)$ and $(q_n)$ satisfy \
requirement \cref{eq:pbmdetectcritical},
posterior concentration occurs around $\{\tht_{0,n}\}$ 
in `balls' of diameters $2k_n$ with growing $n$, and there exist
credible sets $D'_n$ of levels greater than $1/2$ and of diameters
$2k_n$ centred on $\tht_{0,n}$. The credible sets $D_n$ of
\emph{minimal diameters} of any level greater than $1/2$ must
intersect $D_n$. Then the $k_n$-enlargements $C_n$ of the
$D_n$ contain $\tht_{0,n}$.
\begin{theorem}
\label{thm:postconfsize}
Suppose that the posterior recovers communities almost-exactly
with error rate $(k_n)$,
\[
  \Pi\bigl(\, k(\tht_n,\tht_{0,n})\leq k_n\bigm|X^n\,\bigr)
    \convprob{P_{\tht_{0,n}}} 1.
\]
Let $(c_n)$ be bounded away from zero in the limit and let
$(D_n)$ denote a sequence of ($P_n^{\Pi}$-almost-sure)
credible sets of levels $c_n$. Then the $k_n$-enlargements
$C_n(X^n)$ of the $D_n(X^n)$ satisfy,
\[
  P_{\tht_0,n}\bigl( \,\tht_{0,n}\in C_n(X^n)\, \bigr)\to 1,
\]
\ie, the $k_n$-enlargements $(C_n)$ form a consistent sequence of
confidence sets. If the sets $D_n$ have \emph{minimal diameters},
then,
\[
  \diam_n(D_n(X^n))\leq2k_n,\quad\diam_n(C_n(X^n))\leq4k_n,
\]
with high $P_{\tht_0,n}$-probability.
\end{theorem}
\begin{proof}
As in the proof of \cref{thm:postcredisconf},
$P_n^\Pi$-almost-surely defined credible sets $D_n$ of credible
level at least $c_n$ also satisfy,
\[ 
  P_{\tht_0,n}\bigl( \,\Pi(D_n(X^n)|X^n) \geq c_n \,\bigr) = 1.
\]
Convergence of the posterior implies
that with growing $n$, the balls $B_n(\tht_{0,n})$ of radii $k_n$
centred on $\tht_{0,n}$ contain an arbitrarily large fraction of
the total posterior mass, so assuming that $n$ is large enough,
$c_n>\ep>0$ and $\Pi(B_n(\tht_{0,n})|X^n)>1-\ep$ with high
$P_{\tht_{0,n}}$-probability. Conclude that,
\[
  B_n(\tht_{0,n})\cap D_n(X^n) \neq\emptyset,
\] 
with high $P_{\tht_{0,n}}$-probability, which amounts
to asymptotic coverage of $\tht_{0,n}$ for the
$k_n$-enlargement $C_n(X^n)$ of $D_n(X^n)$.
Now fix $n\geq1$. For every $\tht_n\in\Tht_n$ and every
$x^n\in\scrX_n$, let $k_n(\tht_n,x^n)$ denote the minimal
radius of balls $B$ in $\Tht_n$ centred on $\tht_n$ of posterior mass
$\Pi(B|x^n)\geq c_n$. Let $\hat{\tht}_n(x^n)\in\Tht_n$ be
such that,
\[
  k_n(\hat{\tht}_n(x^n))
    =\min\bigl\{ k_n(\tht_n,x^n):\tht_n\in\Tht_n \bigr\},
\]
\ie, the centre point of a \emph{smallest} level-$c_n$
credible ball in $\Tht_n$. To conclude, note that
$k_n(\hat{\tht}_n(X^n))\leq k_n$ with high $P_{\tht_{0,n}}$-probability
and if the $D_n(X^n)$ are of minimal diameters, then they
are contained in $k_n(\hat{\tht}_n(X^n))$-balls centred
on some $\hat{\tht}_n(X^n)$. 
\end{proof}

\subsection{Confidence sets directly from credible sets}
\label{sub:credisconf}

To use \cref{thm:postcredisconf}
or \cref{thm:postconfsize}, the statistician needs
to know that the sequences $(p_n)$ and $(q_n)$ satisfy \cref{eq:pbmpnqn}
or \cref{eq:pbmdetectcritical}, basically to satisfy the testing
condition \cref{eq:pttopttest}. Particularly,
condition \cref{eq:pbmweakdetectcritical} is \emph{not} strong
enough to apply \cref{thm:postconfsize}.
But even if that knowledge is not
available and testing cannot serve as a condition, the use of
credible sets as confidence sets remains valid, as long as
credible levels grow to one fast enough. The following proposition
also provides lower bounds for confidence levels of credible sets.
\begin{proposition}
\label{prop:credisconf}
Let $\theta_{0,n}$ in $\Theta_n$ with uniform priors $\Pi_n$,
$n\geq1$, be given and define $b_n=|\Tht_n|^{-1}=
(\ft12\binom{2n}{n})^{-1}$. Let $D_n$ be a sequence of credible sets,
such that,
\[
  \Pi(D_n(X^n)|X^n)\geq 1-a_n,
\]
for some sequence $(a_n)$ with $a_n=o(b_n)$. Then,
\[
  P_{\theta_0,n}\bigl(\,\theta_0\in D_n(X^n)\,\bigr)
  \geq 1-b_n^{-1}a_n.
\]
\end{proposition}
\begin{proof}
If $\tht_{0,n}\not\in D_n(X^n)$ then $\Pi(\{\tht_{0,n}\}|X^n)\leq a_n$,
$P_n^{\Pi}$-almost-surely. Then,
\[
  \begin{split}
  P_{\tht_0,n}\bigl(&\tht_0\in\Tht\setminus D_n(X^n)\bigr)
    = P_n^{\Pi|\{\tht_0\}}\bigl(\tht_0\in\Tht\setminus D_n(X^n)\bigr)\\
  &= b_n^{-1}\int_{\{\tht_{0,n}\}}
      P_{\tht,n}\bigl(\tht_0\in \Tht\setminus D_n(X^n)\bigr)
      \,d\Pi_n(\tht)\\
  &= b_n^{-1}P_n^\Pi \bigl( 1\{\tht_0\in\Tht_n\setminus D_n(X^n)\}
      \,\Pi(\{\tht_{0,n}\}|X^n)\bigr)
  \leq b_n^{-1}a_n,
  \end{split}
\]
by Bayes's Rule \cref{eq:disintegration}.
\end{proof}

\Cref{thm:coverage} leaves room for mitigation of the lower
bound on credible levels if we are willing to use enlarged
credible sets. There are two competing
influences when enlarging: on the one hand, the prior
masses $b_n=\Pi_n(B_n(\theta_{0,n}))$ become larger, relaxing the
lower bounds for credible levels. On the
other hand, enlargement leads to likelihood ratios with random
fluctuations that take them further away from one (see
\cref{lem:rcfirstlemmaprime,lem:pbmdn}), thus
interfering with notions like contiguity and remote contiguity
(see \cref{app:PBMrc} and \cite{Kleijn21}).
Whether \cref{prop:credisconf} is useful and whether
enlargement of credible sets helps, depends on the sequences
$(p_n)$ and $(q_n)$: we consider the situation in which edge
differences between within-community and between-community edge
probabilities become less-and-less pronounced:
\begin{equation}
  \label{eq:pbmphaseone}
  p_n-q_n=o\bigl(n^{-1}\bigr),
\end{equation}
while satisfying also the condition that, 
\begin{equation}
  \label{eq:pbmphasetwo}
  p_n^{1/2}(1-p_n)^{1/2}
      +q_n^{1/2}(1-q_n)^{1/2}=o\bigl(n|p_n-q_n|\bigr).
\end{equation}
In this regime either $p_n,q_n\to0$ or $p_n,q_n\to1$, signifying
sparsity of either presence or absence of edges respectively. 
If $p_n,q_n\to0$, \cref{eq:pbmphasetwo} amounts to,
\begin{equation}
  \label{eq:pbmphasetwoprime}
  n(p_n^{1/2}-q_n^{1/2})\to \infty,
\end{equation}
so differences between $p_n$ and $q_n$ may not converge to zero
too fast (essentially in order to maintain sufficient distinction
from the Erd\H os-R\'enyi graph \cite{Janson10,Banerjee18}).
For the following lemma we define,
\[
  \rho_n
    = \min\biggl\{\Bigl(\frac{1-p_n}{p_n}\,\frac{q_n}{1-q_n}\Bigr),
      \Bigl(\frac{p_n}{1-p_n}\,\frac{1-q_n}{q_n}\Bigr)\biggr\}
    =e^{-|\lambda_n|},
\]
where $\lambda_n:=\log(1-p_n)-\log(p_n)+\log(q_n)-\log(1-q_n)$, and,
\[
  \alpha_n = \int 2k(\theta_{0,n},\theta_n)(n-k(\theta_{0,n},\theta_n))
      \,d\Pi_n(\theta_n|B_n)
    = \frac1{|B_n|}\sum_{k=0}^{k_n}\binom{n}{k}^2 2k(n-k),
\]
with the following rate for remote contiguity (see
\cref{def:remctg}):
\begin{equation}
  \label{eq:pbmdn}
  d_n = \rho_n^{C\alpha_n|p_n-q_n|},
\end{equation}
for some $C>1$.

For the following lemma, let $(p_n)$ and $(q_n)$ be given (with
resulting $(\rho_n)$), and let $C>$, $(k_n)$ and
$\theta_{0,n}\in\Theta_n$, for all $n\geq1$ be given (with
resulting $(\alpha_n)$ and $(d_n)$).
\begin{lemma}
\label{lem:pbmdn}
Assume that \cref{eq:pbmphasetwo} holds.
Then,
\[
  P_{\theta_0,n} \ctg d_n^{-1} P_n^{\Pi|B_n},
\]
with $B_n=B_n(\theta_{0,n})$ like in \cref{eq:differbyk}.
\end{lemma}
\begin{proof}
Let $(k_n)$ and $\theta_{0,n}\in\Theta_n$ be given. We denote
$P_n=P_n^{\Pi|B}$, $Q_n=P_{\theta_0,n}$ and apply Jensen's
inequality to obtain,
\[
  \begin{split}
  \frac{dP_n}{dQ_n}(X^n)
    &= \frac1{|B_n|}
      \sum_{\theta_n\in B_n}
      \biggl(\frac{1-p_n}{p_n}\,\frac{q_n}{1-q_n}
        \biggr)^{S_n(\theta_n)-T_n(\theta_n)}\\
    & \geq \exp\Bigl(\frac{\lambda_n}{|B_n|}
      \sum_{\theta_n\in B_n} \bigl(S_n(\theta_n)-T_n(\theta_n)\bigr)\Bigr),
  \end{split}
\]
where $(S_n(\theta_{n}),T_n(\theta_{n}))$ is distributed as in
\cref{eq:SnTn}. By invariance of the sum under permutations
of the vertices, we re-sum as follows for any $k\geq1$,
\[
  \frac1{|V_{n,k}|}\sum_{\theta_n\in V_{n,k}} S_n(\theta_n)
    = \frac{2k(n-k)}{n(n-1)}S_n,\,\,
  \frac1{|V_{n,k}|}\sum_{\theta_n\in V_{n,k}} T_n(\theta_n)
    = \frac{2k(n-k)}{n^2}T_n,
\]
where, with the notation $Z_n=Z(\theta_{0,n}')\subset\{1,\ldots,2n\}$,
for a certain representation \(\theta_{0,n}'\) of \(\theta_{0,n}\),
for the zero elements of \(\theta_{0,n}'\),
\[
  \begin{split}
  S_n &= \sum_{i,j\in Z_n}X_{ij}
    + \sum_{i,j\in Z_n^c}X_{ij}\sim\text{Bin}(n(n-1),p_n),\\
  T_n &= \sum_{i\in Z_n,\,j\in Z^c}X_{ij}
    + \sum_{i\in Z_n^c,\,j\in Z}X_{ij}\sim\text{Bin}(n^2,q_n),
  \end{split}
\]
which gives us the lower bound,
\[
  \frac{dP_n}{dQ_n}(X^n)
  \geq \rho_n^{\sum_{k=0}^{k_n}2k(n-k)\frac{|V_{n,k}|}{|B_n|}
    |\bar{S}_n-\bar{T}_n|}
  = \rho_n^{\alpha_n|\bar{S}_n-\bar{T}_n|},
\]
where $\bar{S}_n=S_n/(n(n-1))$ and $\bar{T}_n=T_n/n^2$. By the central
limit theorem,
\[
  \Biggl( \frac{n(\bar{S}_n-p_n)}{p_n^{1/2}(1-p_n)^{1/2}},
    \frac{n(\bar{T}_n-q_n)}{q_n^{1/2}(1-q_n)^{1/2}} \Biggr)
    \convweak{Q_n} N(0,1)\times N(0,1),
\]
which implies that for every $\ep>0$ there exists an $M>0$ such
that,
\[
  \sup_{n\geq1} Q_n\Biggl(
    \frac{n(\bar{S}_n-p_n)}{p_n^{1/2}(1-p_n)^{1/2}}\vee
    \frac{n(\bar{T}_n-q_n)}{q_n^{1/2}(1-q_n)^{1/2}} > M \Biggr)<\ep.
\]
Conclude that,
\[
  \sup_{n\geq1} Q_n\biggl( \Bigl(\frac{dP_n}{dQ_n}\Bigr)^{-1} \leq
    \rho_n^{-\alpha_n\bigl(\frac Mn(p_n^{1/2}(1-p_n)^{1/2}
      +q_n^{1/2}(1-q_n)^{1/2}) + |p_n-q_n|\bigr)} \biggr) \geq 1 - \ep.
\]
Note that the term in the exponent proportional to $M$ is
dominated by $|p_n-q_n|$ by \cref{eq:pbmphasetwo}.
Hence for every $C>1$ and every $\ep>0$,
\[
  Q_n\biggl( \Bigl(\frac{dP_n}{dQ_n}(X^n)\Bigr)^{-1}
    \leq \rho_n^{-C\alpha_n |p_n-q_n|} \biggr) \geq 1 - \ep,
\]
for large enough $n$.
Using the remark following \cref{lem:rcfirstlemmaprime},
we see that $P_{\theta_0,n} \ctg d_n^{-1} P_n^{\Pi|B_n}$,
with $d_n$ as in \cref{eq:pbmdn}.
\end{proof}
This argument amounts to a proof for the following theorem (immediate
from \cref{thm:coverage}).
\begin{theorem}
\label{thm:pbmsparseconfsets}
Let $(k_n)$ be given and assume that $(p_n)$ and $(q_n)$ satisfy
\cref{eq:pbmphaseone} and \cref{eq:pbmphasetwo} (or
$p_n,q_n\to0$ and \cref{eq:pbmphasetwoprime}). Let
$\theta_{0,n}$ in $\Theta_n$ with uniform priors $\Pi_n$
be given and let $D_n(X^n)$ be a sequence of credible sets of credible
levels $1-a_n$, for some sequence $(a_n)$ such that
$b_n^{-1}a_n = o(d_n)$. Then the sets $C_n(X^n)$, associated with
$D_n(X^n)$ under $B_n$ as in \cref{eq:differbyk} satisfy,
\[
  P_{\theta_0,n}\bigl(\,\theta_0\in C_n(X^n)\,\bigr)
    \to1,
\]
\ie, the $C_n(X^n)$ are asymptotic confidence sets.
\end{theorem}
Consider the possible choices for $(a_n)$ if we assume
$k_n=\beta\,n$ for some fixed $\beta\in(0,\ft12)$
(as in the proof of \cref{cor:pbmweakdetectcritical}).
First of all, Stirling's approximation gives rise to the
following approximate lower bound on the factor between
prior mass and prior mass without enlargement:
\[
  \frac{\Pi_n(B_n)}{\Pi_n(\{\tht_{0,n}\}))}
    =\sum_{k=0}^{k_n}\binom{n}{k}^2
    \geq \binom{n}{k_n}^2
    \geq \frac1{2\pi n}\frac1{\beta(1-\beta)}f(\beta)^n,
\]
where $f:(0,\ft12)\to(1,4)$ is given by,
\[
  f(\beta) = (1-\beta)^{-2(1-\beta)}\beta^{-2\beta}.
\]
Approximating $\alpha_n\approx 2k_n(n-k_n)$ for large $n$ and
using \cref{eq:pbmphaseone}, we also have,
\[
  d_n = \rho_n^{C\alpha_n|p_n-q_n|} \approx
    \rho_n^{2Cn^2\beta(1-\beta)|p_n-q_n|}
    = e^{-|\lambda_n|o(n)}.
\]
So if we assume that $\lambda_n=O(1)$, $d_n$ is sub-exponential and
does not play a role for the improvement factor.

Conclude as follows: (let $a_n=o(|\Tht_n|^{-1})\approx o(4^{-n})$
denote the rates appropriate in \cref{prop:credisconf}
and assume $\lambda_n=O(1)$) if we have
credible sets $D_n(X^n)$ of credible levels $1-a_n f(\beta)^{n(1+o(1))}$,
then the sequence of enlarged confidence sets $(C_n(X^n))$,
associated with $D_n(X^n)$ through $B_n$ with $k_n=\beta n$,
covers the true value of the community assignment parameter with
high probability. Credible levels that had to be
of order $1-a_n\approx 1-o(4^{-n})$ previously, can be
of approximate order $1-o(c^{-n})$ for any $1<c<4$ by
enlargement by $B_n$ if conditions
\cref{eq:pbmphaseone} and \cref{eq:pbmphasetwo} hold; the closer
$0<\beta<\ft12$ is to $\ft12$, the closer $c$ is to $1$.


\section{Conclusions and discussion}
\label{sec:pbmconcdisc}

In this paper we consider application of Bayesian posteriors for
frequentist asymptotic inference on the community structure of
sparse planted bi-section graphs. More specifically, we prove that
the posterior recovers the true community assignment (exactly,
respectively almost-exactly) in the sparsest possible
(Chernoff-Hellinger, respectively Kesten-Stigum) cases.

We also use the posterior concentration results to draw conclusions
regarding the role of (enlarged) Bayesian credible sets as
frequentist asymptotic confidence sets. For this purpose, it is
important that the posterior concentration results are sharp;
otherwise the credible sets we choose are too
large and the required confidence levels are too high, leading
to asymptotic confidence sets that are too conservative.

The analysis we give is limited in several respects. First of
all, although realistic regarding expected degrees in large
graphs, the planted bi-section model is a highly stylized random
graph model; more flexible is the family of stochastic block
models, which leaves room for more than two communities, and
classes of unequal sizes (and generalizations thereof). On
uncertainty quantification in stochastic block models, the
literature is very limited: in \cite{vanWaaij20}, the present
analysis is extended to an unknown number of communities of
order $O(\sqrt{n/\log(n)})$, of unknown sizes bounded above
and below proportional to the graph size $n$.

Practical implementation of what we propose is not straightforward:
with a graph $X^n$ of fixed size $n$, a bound like
\cref{eq:pbmposteriorbound} in combination with
\cref{lem:testingpower} (both of which hold for finite $n$)
permits calculation of a lower bound for $k_n$. With the
corresponding enlargement radius, confidence sets can then be
constructed as explicit radius-$k_n$-enlargements of credible
sets from the posterior. This finite-$n$ programme
is followed in \cite{Kleijn21b}.

To conclude we note that Bayesian methods may have lost some
of their popularity of late, because the computational burden
of sampling a posterior distribution is deemed relatively high.
In the planted bi-section model, for example, other more
efficient methods for the recovery of the community
structure in the planted bi-section model exist. Based on the
preceding, however, we argue that if \emph{uncertainty quantification}
for the community structure is the goal, the relatively high
computational cost of simulating a posterior is justifiable.
Since (limiting) sampling distributions of other estimators are
prohibitively hard to obtain or analyse, constructing asymptotic
confidence sets for community structure in other ways may prove
to be very hard or even impossible.


\appendix

\renewcommand{\thesection}{\Alph{section}}


\section{Definitions and conventions\label{app:defs}}

We assume given for every $n\geq1$, a random graph
$X^n$ taking values in the (finite) space
$\scrX_n$ of all undirected graphs with $n$ vertices and no self-loops.
We denote the powerset of $\scrX_n$ by $\scrB_n$ and regard it as
the domain for probability distributions
$P_n:\scrB_n\to[0,1]$ a model $\scrP_n$ parametrized by
$\Theta_n\rightarrow\scrP_n:\theta\mapsto P_{\theta,n}$
with finite parameter spaces $\Theta_n$ (with powerset $\scrG_n$)
and uniform priors
$\Pi_n$ on $\Tht_n$. As frequentists, we assume that there exists
a `true, underlying distribution for the data' $P_{0,n}$; in this
case, that means that for every $n\geq1$, there exists a
$\tht_{0,n}\in\Tht_n$ and corresponding $P_{\tht_0,n}$ from which
the $n$-th graph $X^n$ is drawn.
\begin{definition}
Given $n\geq1$ and a prior probability measure $\Pi_n$ on
$\Tht_n$, define the \emph{$n$-th prior predictive distribution}
as:
\begin{equation}
  \label{eq:priorpred}
  P_n^{\Pi}(A) = \int_\Theta P_{\theta,n}(A)\,d\Pi_n(\theta),
\end{equation}
for all $A\in\scrB_n$. For any $B_n\in\scrG_n$ with $\Pi_n(B_n)>0$,
define also the \emph{$n$-th local prior predictive distribution},
\begin{equation}
  \label{eq:localpriorpred}
  P_n^{\Pi|B}(A) = \frac{1}{\Pi_n(B_n)}\int_{B_n}
    P_{\theta,n}(A)\,d\Pi_n(\theta),
\end{equation}
as the predictive distribution on $\scrX_n$ that results from the prior
$\Pi_n$ when conditioned on $B_n$.
\end{definition}
The prior predictive distribution $P_n^{\Pi}$ is the marginal
distribution for $X^n$ in the Bayesian perspective that
considers parameter and sample jointly
$(\theta,X^n)\in\Theta\times\scrX_n$
as the random quantity of interest. 
\begin{definition}
\label{def:posterior}
Given $n\geq1$,
a \emph{(version of) the posterior} is any set-function
$\scrG_n\times\scrX_n\rightarrow[0,1]:
(A,x^n)\mapsto\Pi(\,A\,|X^n=x^n)$
such that,
\begin{enumerate}
\item for $B\in\scrG_n$, the map
  $x^n\mapsto\Pi(B|X^n=x^n)$ is
  $\scrB_n$-measurable,
\item for all $A\in\scrB_n$ and $V\in\scrG_n$,
  \begin{equation}
    \label{eq:disintegration}
    \int_A\Pi(V|X^n)\,dP_n^{\Pi} = 
    \int_V P_{\theta,n}(A)\,d\Pi_n(\theta).
  \end{equation}
\end{enumerate}
\end{definition}
Bayes's Rule is expressed through equality \cref{eq:disintegration}
and is sometimes referred to as a `disintegration' (of the joint
distribution of $(\theta,X^n)$).

Because we take the perspective of a frequentist using Bayesian
methods, we are obliged to demonstrate that Bayesian definitions
continue to make sense under the assumption that the data $X^n$ is
distributed according to a true, underlying $P_{0,n}$: Bayesian concepts
above that have been defined through conditioning, are almost-sure
with respect to the relevant marginal. In the case of $X^n$, the
relevant marginal is the prior predictive distribution. Accordingly,
we have to assume that $P_{\theta_0,n}\ll P^{\Pi}_n$, for all $n\geq1$.

The following lemma is used in two places in the text. 
\begin{lemma}
\label{lem:oneplusxdivrtothepowerrissmallerthanetothepowerx} 
For all positive integers \(r\) and real numbers \(x>-r,\) \((1+x/r)^r
\leq e^x\).
\end{lemma}
\begin{proof}
Let for \(x>-r\), \(f(x)= r\log(1+x/r)\) and \(g(x)=x\). Then
\(f'(x)=(1+x/r)^{-1}\) and \(g'(x)=1\). Then \(f'(x)\leq g'(x)\),
when \(x\geq 0\), \(f'(x)>g'(x) \) when \(-n<x<0\) and \(f(0)=g(0)\).
It follows that \(f(x)\leq g(x)\) for all \(x>-r\).
As \(y\to e^y\) is increasing for all real \(y\), we find \(x>-n\),
\((1+x/r)^r = e^{f(x)}\leq e^{g(x)}=e^x\).
\end{proof}

\subsubsection*{Notation and conventions}

Asymptotic statements that end in ``... with high probability''
indicate that said statements are true with probabilities that
grow to one.
For given probability measures $P,Q$ on a measurable space
$(\Omega,\scrF)$, we define
the Radon-Nikodym derivative $dP/dQ:\Omega\to[0,\infty)$,
$P$-almost-surely, referring {\it only} to the $Q$-dominated
component of $P$, following \cite{LeCam86}. We also \emph{define}
$(dP/dQ)^{-1}:\Omega\to(0,\infty]:\omega\mapsto 1/(dP/dQ(\omega))$,
$Q$-almost-surely.
Given random variables $Z_n\sim P_n$, weak convergence to a random
variable $Z$ is denoted by $Z_n\convweak{P_n}Z$, convergence
in probability by $Z_n\convprob{P_n}Z$ and almost-sure convergence
(with coupling $P^\infty$) by $Z_n\convas{P^{\infty}}Z$.
The integral of a
real-valued, integrable random variable $X$ with respect to a
probability measure $P$ is denoted $PX$, while integrals over
the model with respect to priors and posteriors are always written
out in Leibniz's or sum notation. The cardinality of a set $B$ is
denoted $|B|$.


\section{Remote contiguity and confidence sets}
\label{app:PBMrc}

Bayesian asymptotics has seen a great deal of development over
recent decades, but the essence of the theory remains
that of Schwartz's theorem: a balance between testing power and
a minimum of prior mass `locally', leads to a controlled limit
for the posterior distribution with a frequentist interpretation.
It has also become clear that the same notion of `locality'
allows conversion of sequences of credible sets to asymptotic
confidence sets and that is the purpose of this
paper as well. `Locality' in the above sense is defined through
a weakened form of contiguity called remote contiguity
\citep{Kleijn21}. In this appendix, we summarize these points,
to support the proofs of \cref{lem:pbmdn} and \cref{thm:pbmsparseconfsets}.

\begin{definition}
\label{def:cred}
Let $(\Theta_n,\scrG_n)$ with priors $\Pi_n$ be given, denote the
sequence of posteriors by $\Pi(\cdot|\cdot):\scrG_n\times\scrX_n\to[0,1]$.
Let $\scrD_n$ denote a collection of measurable subsets of $\Theta_n$. 
A \emph{sequence of credible sets} $(D_n)$ of
\emph{credible levels} $1-a_n$ (where $0\leq a_n\leq1$, $a_n\downarrow0$)
is a sequence of set-valued maps $D_n:\scrX_n\to\scrD_n$ such that
$\Pi(\Theta_n\setminus D_n(x^n)|X^n=x^n)\leq a_n$.
\end{definition}
Note that the posterior is defined $P^\Pi_n$-almost-surely with respect
to its dependence on the data $X^n$, and consequently, so is any credible
set that is derived from it.
\begin{definition}
\label{def:confcred}
Let $D$ be a (credible) set in $\Theta$ and let
$B=\{B(\theta):\theta\in\Theta\}$ denote a collection of model
subsets such that $\theta\in B(\theta)$ for all $\theta\in\Theta$.
A model subset $C$ is said to be (a confidence set)
associated with $D$ under $B$, if for all $\theta\in\Theta\setminus C$,
$B(\theta)\cap D=\emptyset$.
\end{definition}
The relationship between a credible set $D$ and the model subset $C$
associated with $D$ under $B$ is illustrated in \cref{fig:assoc}
(reproduced from \cite[Figure 1]{Kleijn21}) and
detailed in the following theorem. (The notation $P_n\ctg d_n^{-1}Q_n$
is explained below, see \cref{def:remctg}.)
\begin{figure}[bht]
  \begin{center}
    \parbox{0.75\textwidth}{
      \begin{lpic}{assoc-2(0.30)}
        \lbl[t]{43,133;{\Large $\theta$}}
        \lbl[t]{83,182;{\Large $B(\theta)$}}
        \lbl[t]{150,90;{\Large $D$}}
        \lbl[t]{229,180;{\Large $C$}}
      \end{lpic}
      \caption{\label{fig:assoc}The relation between a
      credible set $D$ and its associated
      confidence set $C$ under $B$
      in Venn diagrams: the extra points $\theta$ in the associated
      confidence set $C$ not included in the credible set
      $D$ are characterized by non-empty intersection
      $B(\theta)\cap D\neq\emptyset$. [Reproduced from
      \cite[Figure 1]{Kleijn21}.]}
    }
  \end{center}
\end{figure}
\begin{theorem}
\label{thm:coverage}
Let $\theta_{0,n}\in\Theta_n$ ($n\geq1$) and $0\leq a_n\leq1$, $b_n>0$
such that $a_n=o(b_n)$ be given. Choose priors $\Pi_n$ and let $D_n$
denote level-$(1-a_n)$ credible sets in $\Theta_n$. Furthermore, for
all $\theta\in\Theta$, let
$B_n=\{B_n(\theta_n)\in\scrG_n:\theta_n\in\Theta_n\}$ and $b_n$
denote sequences such that,
\begin{itemize}
\item[(i.)] prior mass is lower bounded, $\Pi_n(B_n(\theta_0))\geq b_n$,
\item[(ii.)] and for some $d_n\downarrow0$ such that $b_n^{-1}a_n=o(d_n)$,
  $P_{\theta_0,n}\ctg d_n^{-1}\, P_n^{\Pi|B(\theta_0)}$.
\end{itemize}
Then any confidence sets $C_n$ associated with the credible
sets $D_n$ under $B_n$ are asymptotically consistent, \ie, for
all $\theta_0\in\Theta$,
\begin{equation}
  \label{eq:coverage}
  P_{\theta_0,n}\bigl(\,\theta_0\in C_n(X^n)\,\bigr)\to 1.
\end{equation}
\end{theorem}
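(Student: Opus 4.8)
The plan is to convert the coverage statement into a bound on the posterior mass of the local model subset $B_n(\theta_0)$, and then transport that bound from the local prior predictive $P_n^{\Pi|B(\theta_0)}$ onto $P_{\theta_0,n}$ using the remote contiguity assumed in~(ii). First I would unpack definition~\ref{def:confcred}: because $C_n$ is associated with $D_n$ under $B_n$, the event $\{\theta_0\notin C_n(\samplen)\}$ is contained in $\{B_n(\theta_0)\cap D_n(\samplen)=\emptyset\}$, and on the latter event $D_n(\samplen)\subseteq\Theta_n\setminus B_n(\theta_0)$. Since $D_n$ has credible level $1-a_n$ (definition~\ref{def:cred}), \ie\ $\Pi(\Theta_n\setminus D_n(\samplen)|\samplen)\leq a_n$, this forces $\Pi(B_n(\theta_0)|\samplen)\leq a_n$ on that event. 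Hence, reading the coverage probability in outer measure where needed (cf.\ definition~\ref{def:confset}),
\[
  P_{\theta_0,n}\bigl(\theta_0\notin C_n(\samplen)\bigr)
    \leq P_{\theta_0,n}\bigl(\Pi(B_n(\theta_0)|\samplen)\leq a_n\bigr)
    =: P_{\theta_0,n}(A_n),
\]
where $A_n\in\scrB_n$ is measurable because $x^n\mapsto\Pi(B_n(\theta_0)|x^n)$ is $\scrB_n$-measurable by definition~\ref{def:posterior}. So it suffices to show $P_{\theta_0,n}(A_n)\to0$.

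Next I would bound $A_n$ under the local prior predictive. By Bayes's rule in disintegrated form~\cref{eq:disintegration}, applied with $V=B_n(\theta_0)$ and $A=A_n$, and using that $\Pi(B_n(\theta_0)|\samplen)\leq a_n$ on $A_n$,
\[
  \int_{B_n(\theta_0)}P_{\theta,n}(A_n)\,d\Pi_n(\theta)
    = \int_{A_n}\Pi(B_n(\theta_0)|\samplen)\,dP_n^{\Pi}
    \leq a_n\,P_n^{\Pi}(A_n)\leq a_n .
\]
Dividing by $\Pi_n(B_n(\theta_0))$ and invoking the prior-mass lower bound~(i), $\Pi_n(B_n(\theta_0))\geq b_n$, yields
\[
  P_n^{\Pi|B(\theta_0)}(A_n)
    = \frac{1}{\Pi_n(B_n(\theta_0))}\int_{B_n(\theta_0)}P_{\theta,n}(A_n)\,d\Pi_n(\theta)
    \leq b_n^{-1}a_n .
\]
By~(ii), $b_n^{-1}a_n=o(d_n)$, so $d_n^{-1}P_n^{\Pi|B(\theta_0)}(A_n)\to0$; the remote contiguity $P_{\theta_0,n}\ctg d_n^{-1}P_n^{\Pi|B(\theta_0)}$ then gives $P_{\theta_0,n}(A_n)\to0$, hence $P_{\theta_0,n}(\theta_0\in C_n(\samplen))\to1$, which is~\cref{eq:coverage}.

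I do not expect a genuine obstacle: once the measurable event $A_n$ is isolated, the argument is just the disintegration identity followed by the defining property of remote contiguity. The two points that want care are (a) reducing $\{\theta_0\notin C_n(\samplen)\}$ to $A_n$ (and passing to outer measure, since $C_n$ itself may fail to be measurable), and (b) keeping the direction of remote contiguity straight — it is precisely the implication that $o(d_n)$-smallness under $P_n^{\Pi|B(\theta_0)}$ transfers to $o(1)$-smallness under the true law $P_{\theta_0,n}$, which is exactly why the balance condition $b_n^{-1}a_n=o(d_n)$ is imposed in~(ii). If one allows $D_n$ to be defined only with high $P_n^{\Pi}$-probability rather than $P_n^{\Pi}$-almost surely, the same chain applies with an extra $o(1)$ term that is harmless in the limit.
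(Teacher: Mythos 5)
Your proof is correct and follows essentially the route the paper itself relies on (stated here and proved in Kleijn (2018), with the special case $B_n(\theta_0)=\{\theta_{0,n}\}$ spelled out in the proof of Proposition~\ref{prop:credisconf}): the association of $C_n$ with $D_n$ under $B_n$ reduces coverage failure to the event $\Pi(B_n(\theta_0)|X^n)\leq a_n$, Bayes's rule \cref{eq:disintegration} together with the prior-mass bound (i.) controls that event under $P_n^{\Pi|B(\theta_0)}$ by $b_n^{-1}a_n$, and remote contiguity (ii.) transfers the $o(d_n)$ bound to $o(1)$ under $P_{\theta_0,n}$. No gaps; your handling of measurability/outer measure and of almost-sure versus high-probability definitions of $D_n$ matches the paper's conventions.
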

In most of \cref{sec:pbmuncertainty}, the sets $B_n$ are
simply,
\[
  B_n(\theta_n)=\{\theta_n\},
\]
for every $n\geq1$ and every $\theta_n\in\Theta_n$,
so that the confidence sets $C_n$ associated with
any credible sets $D_n\subset\Theta_n$ under $B_n$ are simply
\emph{equal to} $D_n$. In that case, $P_{\theta_0,n}\ctg c_n^{-1}\,
P_n^{\Pi|B(\theta_0)}$ for any rate $(c_n)$, $c_n\downarrow0$,
so all sequences $a_n=o(b_n)$ are permitted.
Since the prior mass in $B_n(\theta_{0,n})$ is fixed,
\cref{thm:coverage} says that, if we have a
sequence of credible sets $D_n(X^n)\subset\Theta_n$ of high enough
credible levels $1-a_n$, then these $D_n(X^n)$ are also
asymptotically consistent confidence sets (see
\cref{prop:credisconf}). 

With an eye on enlarged credible sets, we note that condition~{\it (ii.)}
of \cref{thm:coverage} says that the sequence $(P_{\theta_0,n})$
is required to be \emph{remotely contiguous} with respect to
$P_n^{\Pi|B(\theta_0)}$ at rate $b_na_n^{-1}$.
\begin{definition}
\label{def:remctg}
Given the spaces $\scrX_n$, $n\geq1$ with two
sequences $(P_n)$ and $(Q_n)$ of probability measures and
a sequence $\rho_n\downarrow0$, we say that
$Q_n$ is $\rho_n$-remotely contiguous with respect to $P_n$,
notation $Q_n\contig \rho_n^{-1}P_n$,
if,
\begin{equation}
  \label{eq:defrc}
  P_n\phi_n(X^n) = o(\rho_n)
  \quad\Rightarrow\quad Q_n\phi_n(X^n)=o(1),
\end{equation}
for every sequence of $\scrB_n$-measurable $\phi_n:\scrX_n\rightarrow[0,1]$.
\end{definition}
According to section~3 in \cite{Kleijn21}, weak relative
compactness of a sequence of re-scaled (inverse) likelihood ratios
is sufficient for remote contiguity.
\begin{lemma}
\label{lem:rcfirstlemmaprime}
Given $(P_n)$, $(Q_n)$, $d_n\downarrow0$,
$(Q_n)$ is $d_n$-remotely contiguous with respect to $(P_n)$
if, under $Q_n$, every subsequence of
$(d_n(dP_n/dQ_n)^{-1})$ has a weakly convergent subsequence.
\end{lemma}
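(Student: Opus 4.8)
The plan is to reduce the statement to the classical ``uniform tightness implies remote contiguity'' argument. Write $W_n=d_n\,(dP_n/dQ_n)^{-1}$, a $(0,\infty]$-valued random variable on $\scrX_n$. First I would record that the hypothesis --- every subsequence of $(W_n)$ admits a sub-subsequence converging weakly, under $Q_n$, to a \emph{proper} probability law --- is, by Prokhorov's theorem, equivalent to uniform tightness of $(W_n)$ under $Q_n$: for each $\eta>0$ there is an $M=M(\eta)<\infty$ with $\sup_n Q_n(W_n>M)\leq\eta$, and in particular $Q_n(W_n=\infty)\to0$. Only the contrapositive needs a word: if uniform tightness failed there would be an $\eta>0$ and a subsequence along which $Q_n(W_n>M_j)>\eta$ with $M_j\to\infty$, and no sub-subsequence of that could converge weakly to a proper law.

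Next, fix any test sequence $\phi_n:\scrX_n\to[0,1]$ with $P_n\phi_n=o(d_n)$, fix $\eta>0$, and put $M=M(\eta)$. I would split
\[
  Q_n\phi_n(\samplen)=Q_n\bigl[\phi_n\,\mathbf{1}\{W_n\leq M\}\bigr]
    +Q_n\bigl[\phi_n\,\mathbf{1}\{W_n>M\}\bigr].
\]
The second term is at most $Q_n(W_n>M)\leq\eta$ for every $n$. For the first term, observe that on $\{W_n\leq M\}$ the $Q_n$-dominated component of $P_n$ has density $dP_n/dQ_n\geq d_n/M>0$, so pointwise $\mathbf{1}\{W_n\leq M\}\leq (M/d_n)\,\mathbf{1}\{W_n\leq M\}\,(dP_n/dQ_n)$; integrating against $Q_n$ and using $Q_n\bigl[h\,dP_n/dQ_n\bigr]\leq P_n h$ for nonnegative $h$ (valid despite Le~Cam's convention, precisely because we have restricted to the set on which the density is strictly positive) gives
\[
  Q_n\bigl[\phi_n\,\mathbf{1}\{W_n\leq M\}\bigr]
    \leq \frac{M}{d_n}\,P_n\phi_n .
\]
Since $M$ is fixed and $P_n\phi_n=o(d_n)$, the right-hand side is $\tfrac{M}{d_n}\,o(d_n)=o(1)$.

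Combining the two bounds, $\limsup_n Q_n\phi_n(\samplen)\leq\eta$, and since $\eta>0$ was arbitrary, $Q_n\phi_n(\samplen)=o(1)$. As $\phi_n$ was an arbitrary test sequence with $P_n\phi_n=o(d_n)$, this is exactly the implication in \cref{def:remctg} defining $Q_n\contig d_n^{-1}P_n$. I expect the genuinely delicate points to be just two: (i) reading ``weakly convergent subsequence'' strictly enough to yield honest uniform tightness, including $Q_n(W_n=\infty)\to0$ --- otherwise mass escapes to $+\infty$ and the conclusion can fail; and (ii) executing the change-of-measure step cleanly under Le~Cam's non-standard definition of $dP_n/dQ_n$, i.e.\ verifying that restricting to $\{W_n\leq M\}$ discards exactly the $Q_n$-singular part of $P_n$, so that $Q_n[h\,dP_n/dQ_n]\leq P_n h$ is legitimate. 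Everything else is bookkeeping.
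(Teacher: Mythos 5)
Your argument is correct and is essentially the canonical one: the paper does not reprove this lemma but defers to section~3 of \citep{Kleijn18}, and the proof there (as anticipated by the remark following the lemma in the text) is exactly your route — identify the subsequence condition with uniform tightness of $d_n(dP_n/dQ_n)^{-1}$ under $Q_n$ via Prokhorov, then split $Q_n\phi_n$ on the event $\{d_n(dP_n/dQ_n)^{-1}\leq M\}$ and bound the first piece by $(M/d_n)P_n\phi_n=o(1)$ using the change of measure on the $Q_n$-dominated part. Your two flagged subtleties (properness of the weak limits, i.e.\ no mass escaping to $+\infty$, and the legitimacy of $Q_n[h\,dP_n/dQ_n]\leq P_nh$ under Le~Cam's convention) are precisely the right points and are handled correctly.
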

According to Prokhorov's theorem, the condition of
\cref{lem:rcfirstlemmaprime} is equivalent to
uniform tightness: for every $\ep>0$ there exists an $M>0$ such that,
\[
  \sup_{n\geq1} P\Bigl( d_n\Bigl(\frac{dP_n}{dQ_n}\Bigr)^{-1}(X^n)>M
    \Bigr) < \ep.
\]



\bibliography{pbm4.bib}

\end{document}